\documentclass[final,onefignum,onetabnum]{siamart220329}
\usepackage{lipsum}
\usepackage{amsfonts}
\usepackage{amssymb}
\usepackage{graphicx}
\usepackage{epstopdf}
\usepackage{algorithmic}
\usepackage{enumitem}
\usepackage{booktabs}
\usepackage{multirow}
\usepackage{amsopn}
\usepackage{bm}
\ifpdf
\DeclareGraphicsExtensions{.eps,.pdf,.png,.jpg}
\else
\DeclareGraphicsExtensions{.eps}
\fi

\newsiamremark{remark}{Remark}
\newsiamthm{exper}{Experiment} 
  
\DeclareMathOperator{\diag}{diag}
\DeclareMathOperator{\spans}{span}

\DeclareMathOperator{\new}{new}

\newcommand{\ellc}{\ell_{\mathrm{c}}}

\newcommand{\XX}{\mathcal{X}}

\headers{A NEW JACOBI--DAVIDSON VARIANT FOR HERMITIAN EIGENPROBLEMS}{JINZHI HUANG}

\title{JD-V: A new variant of Jacobi--Davidson method
	for large Hermitian eigenproblems\thanks{Submitted to the editors DATE. 
		\funding{This work is supported by the Youth Fund of the 
			National Science Foundation of China (Grant No. 12301485) 
			and the Jiangsu Province Youth Science and
			Technology Talent Support Program (Grant No. JSTJ-2025-828)}}}

\author{Jinzhi Huang\thanks{School of Mathematical Sciences, Soochow 
		University, 215006 Suzhou, China (\url{jzhuang21@suda.edu.cn}).}}
\begin{document}
%\linenumbers
\maketitle

\begin{abstract}
A novel variant of the Jacobi-Davidson (JD) type method for Hermitian 
eigenvalue problems, designated as JD-V, is proposed based on a newly 
designed correction equation, whose solution is shown to be nearly as 
effective as that of the standard correction 
equation for subspace expansion. 
Rigorous convergence analysis of MINRES for solving these  
equations reveals that the inner iterations of JD-V are significantly 
more efficient than those of the standard JD method when highly clustered 
eigenvalues are of interest. 
A thick-restart JD-V algorithm with deflation and purgation is developed 
to compute several eigenpairs of a a large-scale Hermitian matrix. 
Numerical experiments confirm the theoretical results and demonstrate the 
considerable superiority of JD-V over standard JD in overall efficiency. 
\end{abstract}
 
\begin{keywords}
Hermitian eigenvalue problem, 
%Eigenvalue,
%eigenvector, 
Jacobi-Davidson method,
%outer iteration, 
correction equation, 
MINRES method, 
inner iteration, 
convergence  
\end{keywords}

\begin{AMS}
65F15, 15A18, 65F10, 65F25
\end{AMS}

\section{Introduction}\label{sec:1}
Let $A\in\mathbb{C}^{n\times n}$ be a large-scale, possibly 
sparse, and Hermitian matrix, i.e., $A^*=A$, where the superscript 
$*$ denotes the conjugate transpose.
Consider its eigenvalue decomposition: 
\begin{equation}\label{eigA}
  A=X\Lambda X^*,
\end{equation}
where $X=[x_1,\dots,x_n]\in\mathbb{C}^{n\times n}$ is unitary, and  
$\Lambda=\diag\{\lambda_1,\dots,\lambda_n\}\in\mathbb{R}^{n\times n}$.  
Such $(\lambda_i,x_i)$ are called eigenpairs of $A$, with 
$\lambda_i\in\mathbb{R}$ the eigenvalues, and $x_i\in\mathbb{C}^{n}$ 
 the eigenvectors, $i=1,\dots,n$. 
Given a target $\tau\in\mathbb{R}$, assume that 
\begin{equation}\label{order}
	|\lambda_1-\tau|<\dots<|\lambda_{\ell}-\tau|
	<|\lambda_{\ell+1}-\tau|\leq\dots\leq|\lambda_{n}-\tau|.
\end{equation}
We aim to compute the $\ell$ eigenpairs $(\lambda_1,x_1),\dots,
(\lambda_\ell,x_\ell)$ with the eigenvalues closest to $\tau$. 
In particular, if $\tau$ lies within the spectrum $(\lambda_{\min}, \lambda_{\max})$ 
of $A$ and is  close to neither the smallest eigenvalue $\lambda_{\min}$ nor the largest one $\lambda_{\max}$,  
then $(\lambda_i,x_i)$ are classified as interior eigenpairs; otherwise, they are extreme, i.e., smallest or largest, ones. 
  
Several methods \cite{bai2000templates,parlett1998symmetric,
	stewart2001matrix} are available for solving 
problems of this kind. Among them, the Jacobi-Davidson (JD) method 
\cite{sleijpen1996jacobi,sleijpen2000jacobi,sleijpen1998efficient} is 
a popular one. 
This is an inner-outer iterative method: the outer iterations compute 
the desired eigenpairs from the search subspace using one of the standard, 
harmonic, refined, and refined harmonic extraction approaches 
\cite{jia1997refined,jia2002refinedh,stewart2001matrix}, while the inner 
iterations employ, e.g., a Krylov subspace solver 
\cite{barrett1994templates,golub2012matrix}, to approximately 
solve an $n$-by-$n$ Hermitian (typically indefinite) linear system, 
called correction equation, to expand the search subspace. 

The inner iterations dominate the overall efficiency of the JD method. 
Fortunately, the correction equations do not need to be solved with high 
accuracy. 
Notay \cite{notay2002combination} analyzes the simplified JD method (JDCG) 
for Hermitian eigenvalue problems, in which the correction equations 
are solved by the preconditioned conjugate gradient (PCG) method 
\cite{greenbaum1997iterative}, and the solution is directly added to the 
current approximate eigenvector to form a new one. 
He monitors the outer convergence via the reduction of inner residual 
norms designs stopping criteria for the inner iterations. 
Hochstenbach and Notay \cite{hochstenbach2009controlling} extend these 
results to Hermitian generalized eigenvalue problems. 
Stathopoulos and McCombs \cite{stathopoulos2010primme} adapt the stopping 
criteria from \cite{hochstenbach2009controlling,notay2002combination} to
the inner iterations of their general JD methods with subspace acceleration for Hermitian eigenvalue problems. 
%Instead of PCG, they employ the quasi-minimal residual (QMR) algorithm 
%\cite{stathopoulos2007nearly} to solve the correction equations. 
For general JD methods, Jia and Li 
\cite{jia2014inner,jia2015harmonic} prove that solving the correction 
equations to {\em low or modest} accuracy with the relative 
error of approximate solutions dropping below 
$\varepsilon_{\mathrm{in}}\in[10^{-4},10^{-3}]$ suffices to make the 
outer iterations of the resulting JD behave as if all the 
correction equations were solved exactly. 
Specifically, if the exact JD method converges rapidly in the sense of 
requiring only about ten outer iterations, then 
$\varepsilon_{\mathrm{in}}\approx10^{-4}$ suffices for the resulting 
inexact JD method to converge in almost the same number of outer iterations. 
If the exact JD method converges slowly and uses many outer iterations, 
then $\varepsilon_{\mathrm{in}}\in[10^{-3},10^{-2}]$ enables the inexact 
JD method to mimic the convergence behavior of its exact counterpart well. 
Based on those, they propose practical stopping criteria 
for the inner solvers and improve the overall efficiency of JD methods 
substantially. 
The author and Jia \cite{huang2019inner} extend the results in 
\cite{jia2014inner,jia2015harmonic} to JD-type singular value decomposition 
(SVD) methods.
 
For Hermitian linear systems, the method of choice among Krylov subspace 
solvers is the minimal residual (MINRES) method \cite[Section 2.4]{greenbaum1997iterative}. 
Its convergence rate is uniquely determined by the 
eigenvalues of the coefficient matrix. 
Specifically, MINRES may converge slowly when the coefficient matrix 
has relatively small eigenvalues in magnitude; if these small eigenvalues 
are discarded, MINRES will converge much faster. 
As will be shown, when  $\lambda_1$ is close to $\tau$, if the 
orthogonal projector in the correction equation of JD 
is replaced by a new one that is properly constructed from approximate eigenvectors associated with all the clustered eigenvalues near $\tau$ rather than only $\lambda_1$, then the coefficient matrix of the new correction equation has no 
eigenvalues of small magnitude. In this case, MINRES can converge much faster. 
Remarkably, extensive numerical experiments have shown that as JD method proceeds, the search subspace yields 
increasingly accurate approximate eigenvectors corresponding 
to those clustered eigenvalues. 
In this paper, inspired by the work of the author and Jia 
\cite{huang2026preconditioning}, we will fully utilize this to derive new correction equations and propose a new
variant of JD, which we call JD-V. 

We show that, under certain conditions, the solutions to the proposed and 
standard correction equations are equally effective in expanding the 
search subspace, ensuring that the outer convergence of JD‑V mimics that of JD.  
Then, we present a rigorous convergence analysis of MINRES in a unified 
form for both equations, showing that MINRES converges significantly 
faster for the new equation than for the standard one, provided that the eigenvalue of interest is tightly clustered with others. 
This is particularly common when interior eigenpairs are sought.  
Additionally, we explain why the correction equation proposed in 
\cite{genseberger1999alternative} is less effective than those in JD and 
JD‑V, and why the corresponding JD variant requires more outer iterations 
to converge. For practical purpose, we establish quantitative criteria 
and develop an adaptive approach to dynamically form the new correction 
equation at each outer iteration. 
Moreover, we adapt the new thick-restart strategy proposed in 
\cite{huang2026preconditioning} for large SVD computations to our context, 
and develop a thick-restart JD-V algorithm that incorporates deflation 
and purgation techniques to compute  $\ell$  eigenpairs of $A$.  
   
Specifically, we take the standard extraction-based JD method as an 
example to derive the new correction equations. The 
theoretical results and analysis are also adaptable to the harmonic, refined,  
and refined harmonic extractions-based JD methods \cite{stathopoulos2007nearly,
	stathopoulos2007nearly2,stathopoulos2010primme}; 
so are their corresponding thick-restart algorithms.  
Moreover, any preconditioner designed for the standard correction 
equations in JD can be applied directly to their counterparts in JD-V. 

The rest of this paper is organized as follows. 
Section~\ref{sec:2} reviews the basic standard 
JD method for computing one eigenpair of $A$, where the shift  
in every correction equation is fixed to the target $\tau$ to simplify 
the analysis and presentation.
Section~\ref{sec:3} derives the new 
correction equation, analyzes the convergence of MINRES when applied 
to it, and discusses its adaptive formation in practical computations.
Section~\ref{sec:4} develops the thick-restart JD-V algorithm with 
adaptively varying shifts, equipped with deflation and purgation 
techniques, to compute several eigenpairs of $A$.  
Section~\ref{sec:5} reports the numerical experiments, and 
section~\ref{sec:6} concludes the paper.
 
\section{The basic JD method}\label{sec:2}
We review the basic standard extraction-based JD method \cite{sleijpen2000jacobi} for computing the eigenpair 
$(\lambda_1,x_1)$ of $A$.

At the $k$th outer iteration, assume that a $k$-dimensional search  
subspace $\XX\subset\mathbb{C}^n$ is available. The standard 
extraction approach seeks scalars $\theta_i\in\mathbb{R}$ 
and unit-length vectors $\tilde x_i\in\XX$ satisfying the 
following  
standard orthogonal requirement:
\begin{equation}\label{extraction}
	\qquad  r_i:=A\tilde x_i-\theta_i\tilde x_i\perp \XX,
	\qquad  i=1,\dots,k. 
\end{equation}
Such $(\theta_i,\tilde x_i)$ are called the Ritz pairs of $A$ with 
respect to $\XX$, where $\theta_i$ and $\tilde x_i$ are the Ritz 
values and Ritz vectors, respectively,  
and $r_i$ are the associated residuals. 
 
Let the columns of $\widetilde X\in\mathbb{C}^{n\times k}$ form an 
orthonormal basis for $\XX$.
Write $\tilde x_i=\widetilde Xs_i$ for some
$s_i\in\mathbb{C}^{k}$, $i=1,\dots,k$. Then \eqref{extraction} amounts to 
\begin{equation}\label{projection}
 \qquad Hs_i=\theta_is_i,\qquad i=1,\dots,k, 
\end{equation}  
meaning that $(\theta_i,s_i),i=1,\dots,k$ are eigenpairs of $H=\widetilde X^*A\widetilde X$. Therefore, in practice, the standard  
JD method computes the eigendecomposition of 
$H$, orders the eigenvalues by  $|\theta_1-\tau|\leq|\theta_2-\tau|\leq\dots\leq|\theta_k-\tau|$,   
and takes the Ritz pairs
\begin{equation}\label{Ritzpair}
	\qquad  
	(\theta_i,\tilde x_i)=(\theta_i,\widetilde X s_i), 
	\qquad 
	i=1,\dots,k.
\end{equation}
Among them, take $(\theta_1,\tilde x_1)$ as the approximation 
to the desired eigenpair $(\lambda_1,x_1)$. 
It is considered to have converged if its residual 
$r_1$, defined by \eqref{extraction}, satisfies 
\begin{equation}\label{converg}
	\|r_1\|\leq\|A\|_1\cdot\varepsilon_{\mathrm{out}} 
\end{equation}
for a user-prescribed outer stopping tolerance $\varepsilon_{\mathrm{out}}>0$, 
where $\|A\|_1$ is the $1$-norm of $A$. 
In this case, we terminate the JD method. 

If $(\theta_1,\tilde x_1)$ has not yet converged, the JD method
approximately solves a large-scale Hermitian and generally 
indefinite linear correction equation 
\begin{equation}\label{correction}
	\qquad 	
	(I-\tilde x_1\tilde x_1^*)(A-\tau I)
	(I-\tilde x_1\tilde x_1^*)t=-r_1
	\quad\mbox{with}\quad
	t\perp \tilde x_1,  
\end{equation}
using a Krylov subspace-type iterative method, e.g., the commonly 
used MINRES method, 
to {\em low or modest accuracy} \cite{jia2014inner,jia2015harmonic}; see \eqref{innstop}.  
The approximate solution $\tilde t$ is then used  to expand the 
search subspace: % $\XX_{\new}=\spans\{\XX,\tilde t\}$.
\begin{equation}\label{Xnew} 
	\XX_{\new}=\spans\{\XX,\tilde t\}.
\end{equation} 
Specifically, the orthonormal basis matrix for $\XX_{\new}$ is updated by 
\begin{equation}\label{expansion}
\widetilde X_{\new}=[\widetilde X,x_+]
\qquad\mbox{with}\qquad	
x_{+}=\frac{(I-\widetilde X\widetilde X^*)
		\tilde t}{\|(I-\widetilde X\widetilde X^*)\tilde t\|}.
\end{equation}  
We then seek a new Ritz approximation to the desired 
 $(\lambda_1, x_1)$ from $\XX_{\new}$ as described above. 
The basic JD method proceeds in this manner until $(\theta_1,\tilde x_1)$ converges.   
 
Solving the correction equation \eqref{correction} dominates the 
overall computational cost of the JD method. 
However, as observed numerically and  later confirmed theoretically, 
MINRES may converge very slowly for \eqref{correction} when 
$\lambda_1$ is tightly clustered with other eigenvalues of $A$ 
(see Remark~\ref{remark1}), which corresponds to the case where the 
coefficient matrix $\widetilde B_1$ of \eqref{correction} has 
a cluster of eigenvalues of small magnitude.  
Fortunately, as JD proceeds, the search subspace $\XX$ naturally 
captures increasingly accurate approximations to the eigenvectors 
associated with the clustered eigenvalues near $\tau$. 
These approximations, in turn, provide crucial information regarding 
the eigenvectors corresponding to the smallest-in-magnitude eigenvalues 
of $\widetilde B_1$. 
Motivated by this, we shall formulate a new correction equation that 
is significantly more amenable to MINRES, while ensuring that its 
solution expands $\XX$ as effectively as that of \eqref{correction}.  
 
\section{A new correction equation-based JD-V type method}\label{sec:3}
Suppose that there are $m$ clustered eigenvalues $\lambda_1,\dots,\lambda_m$ of $A$ near $\tau$:
\begin{equation}\label{ordernew}
	|\lambda_1-\tau|\lesssim\dots\lesssim|\lambda_{m}-\tau| 
	\ll |\lambda_{m+1}-\tau|\leq\dots\leq|\lambda_{n}-\tau|.
\end{equation}
From \eqref{eigA}, denote the corresponding $m$-dimensional block eigenpair by
\begin{equation}\label{block}
	(\Lambda_m,X_{m}) = 
	\left(\diag\{\lambda_1,\dots,\lambda_m\},
	[x_1,\dots,x_m] \right), 
\end{equation}
which satisfies $AX_m=X_m\Lambda_m$ and $\Lambda_m=X_m^*AX_m$.  
At the $k$th outer iteration with $k\geq m$, suppose that 
the Ritz pairs $(\theta_i,\tilde x_i), i=1,\dots, m$ computed in 
\eqref{Ritzpair} are sufficiently accurate to represent the exact 
eigenpairs $(\lambda_i,x_i),i=1,\dots,m$. 
Then  
\begin{equation}\label{blockRitz}
	(\Theta_m,\widetilde X_{m} )
	=\left(\diag\{\theta_1,\dots,\theta_m\} ,[\tilde x_1,\dots,\tilde x_m]\right) 
\end{equation} 
serves as a reliable approximation to $(\Lambda_m,X_m)$. 
Additionally, it is easily verified from \eqref{projection} that
 $\widetilde X_m$ is column-orthonormal, and $\Theta_m = \widetilde X_m^*A\widetilde X_m$,  
%\begin{equation}\label{Theta}
%	\Theta_m = \widetilde X_m^*A\widetilde X_m,   
%\end{equation}
meaning that $\Theta_m$ is a block Rayleigh quotient of $A$ with respect to $\widetilde X_m$.   
  
\subsection{The new correction equation}\label{subsec:1} 
Partition $\widetilde X_m=[\tilde x_1,\widetilde X_{2:m}]$ with $\widetilde X_{2:m}=[\tilde x_2,\dots,\tilde x_m]$. Since the columns of $\widetilde X_m$ are  orthonormal, 
we have  
\begin{equation*} 
	(I-\widetilde X_m\widetilde X_m^*)(I-\tilde x_1\tilde x_1^*)
	=I-\widetilde X_m\widetilde X_m^* 
	\quad\mbox{and}\quad
	I-\tilde x_1\tilde x_1^*=
	(I-\widetilde X_m\widetilde X_m^*) 
	+\widetilde X_{2:m}\widetilde X_{2:m}^*.
\end{equation*}
Besides, condition \eqref{extraction} imposes 
$ r_1\perp\widetilde X_m$, so that $(I-\widetilde X_m\widetilde X_m^*)r_1=r_1$. 
As a consequence, premultiplying both sides of \eqref{correction} by $I-\widetilde X_m\widetilde X_m^*$ yields 
\begin{eqnarray}
	(I-\widetilde X_m\widetilde X_m^*)(A-\tau I)
	(I-\widetilde X_m\widetilde X_m^*)t
	&=&-r_1-(I-\widetilde X_m\widetilde X_m^*)(A-\tau I)\widetilde X_{2:m}\widetilde X_{2:m}^*t  \nonumber  \\
	%&=&-r_1-(A\widetilde X_{2:m}-\widetilde X_m\widetilde X_m^*A\widetilde X_{2:m})\widetilde X_{2:m}^*t \nonumber \\
	&=&-r_1-(A\widetilde X_{2:m}-\widetilde X_{2:m}\Theta_{2:m})\widetilde X_{2:m}^*t \nonumber \\ 
	&=& -r_1-R_{2:m}\widetilde X_{2:m}^*t. \label{derive1} %.\label{rtail}
\end{eqnarray} 
Here, $\Theta_{2:m}= \widetilde X_{2:m}^*A\widetilde X_{2:m}=\diag\{\theta_2,\dots,\theta_m\}$, and  
\begin{equation*}%\label{R2}
R_{2:m} = A\widetilde X_{2:m}-\widetilde X_{2:m}\Theta_{2:m}
=[r_2,\dots,r_m] 
\end{equation*} 
is the block residual associated with the block Ritz pair $(\Theta_{2:m},\widetilde X_{2:m})$, where the column $r_i$ is the residual of $(\theta_i,\tilde x_i)$, $i=2,\dots,m$; see \eqref{extraction}.  
%Then \eqref{rtail} can be rewritten as
%\begin{equation}%\label{derive1}
%	(I-\widetilde X_m\widetilde X_m^*)(A-\tau I)
%	(I-\widetilde X_m\widetilde X_m^*)t=-r_1-R_{2:m}\widetilde X_{2:m}^*t.
%\end{equation} 

As is standard in the literature, $\|t\|$ is asymptotically proportional to the error in $\tilde x_1$, while each $\|r_i\|$ is of the same order as the error in $\tilde x_i$, $i=1,\dots,m$.  We have 
\begin{equation*}%\label{R_2}
	\|R_{2:m}\widetilde X_{2:m}^*t\|\leq\|R_{2:m}\|\|t\|\approx \mathcal{O}(\|r_1\|^2) 
\end{equation*}
provided that 
$\|r_i\|$, $i=2,\dots,m$ are comparable to $\|r_1\|$,  
implying that $(\theta_i,\tilde x_i)$, $i=2,\dots,m$ have similar accuracy to $(\theta_1,\tilde x_1)$. 
Alternatively, as long as $(\theta_i,\tilde x_i)$, $i=2,\dots,m$ have sufficient accuracy---albeit inferior to $(\theta_1,\tilde x_1)$ as the latter approaches convergence---the asymptotic relation  
\begin{equation}\label{R_3}
	\|R_{2:m}\widetilde X_{2:m}^*t\|\leq\|R_{2:m}\|\|t\|\approx \|R_{2:m}\|\cdot \mathcal{O}(\|r_1\|)   
\end{equation} 
remains valid. In these cases, the approximation 
$r_1+R_{2:m}\widetilde X_{2:m}^*t\approx r_1$ holds, provided that $\|R_{2:m}\|$ is comparable to $\|r_1\|$ or sufficiently small. 
 
Neglecting the higher-order term $R_{2:m}\widetilde X_{2:m}^*t$ in \eqref{derive1} leads to %the simplified equation 
\begin{equation*}%\label{inter}
	(I-\widetilde X_m\widetilde X_m^*)(A-\tau I)
	(I-\widetilde X_m\widetilde X_m^*)t=-r_1
	\quad\mbox{with}\quad
	t\perp\tilde x_1.
\end{equation*} 
%whose solution is constrained by $t\perp\tilde x_1$, inherited from \eqref{correction}. 
Clearly, the solution 
is non-unique, and can be written as 
$t_{\star}+\spans\{\widetilde X_{2:m}\}$ for a specific 
$t_{\star}\perp\tilde x_1$. Nevertheless, by virtue of the subspace expansion mechanism described in section~\ref{sec:2}, any such solution yields the same expansion vector $x_+$ in \eqref{expansion}, thereby expanding $\XX$ equivalently. 
To guarantee uniqueness, we impose the additional constraint 
$t\perp \spans\{\widetilde X_{2:m}\}$, which, combined with $t\perp \tilde x_1$, dictates that $t\perp\widetilde X_m$.   
We thus arrive at the new correction equation:  
\begin{equation}\label{correctionm} 
	(I-\widetilde X_m\widetilde X_m^*)(A-\tau I)
	(I-\widetilde X_m\widetilde X_m^*)t=-r_1 
	\quad\mbox{with}\quad
	t\perp \widetilde X_m,
\end{equation}
whose unique solution shares the similar subspace-expanding efficacy to 
that of \eqref{correction}. 

Therefore, at the $k$th step, if $(\theta_1,\tilde x_1)$ fails to converge 
while $(\theta_i,\tilde x_i)$, $i=2,\dots,m$ are sufficiently accurate, 
we apply MINRES to solve \eqref{correctionm} to a low or modest accuracy, 
and use the approximate solution $\tilde t$ to expand the search subspace 
via \eqref{Xnew}--\eqref{expansion}. 
The resulting approach is referred to as the JD-V method.
 
\subsection{Convergence analysis on MINRES for \eqref{correctionm}}\label{subsec:2} 
Observe that the coefficient matrix $\widetilde B_m$ of \eqref{correctionm} is the restriction of $A-\tau I$ to the 
orthogonal complement $\widetilde X_m^{\perp}$. 
Let the columns of $\widetilde Y_m$ form an orthonormal basis for $\widetilde X_m^{\perp}$. 
Then we have 
\begin{equation}\label{Mm}
	\widetilde B_m=\widetilde Y_m\widetilde B_m^{\prime}\widetilde Y_m^*
	\qquad\mbox{with}\qquad
	\widetilde B_m^{\prime}=\widetilde Y_m^*(A-\tau I)\widetilde Y_m. 
\end{equation}
 
Denote by $\tilde t_j$ the approximate solution to \eqref{correctionm} generated at the $j$th MINRES iteration with the starting vector 
$\tilde t_0\in\widetilde X_m^{\perp}$, $j=1,2,\dots,n-m$.  
The inner residual is 
\begin{equation*}%\label{innres}
	r_{\mathrm{in}, j}=\widetilde B_m\tilde t_j+r_1.
\end{equation*}  
Before proceeding, it is useful to cast \eqref{correctionm} 
into an equivalent but simpler form. 
Since the derivation is analogous to that of \cite[Theorem~3.3]{huang2026preconditioning}, the detailed proof is omitted.  

\begin{theorem}\label{thm1}
	The solution to \eqref{correctionm} takes the form 
	$t=\widetilde Y_m t^{\prime}$, where 
	$t^{\prime}$ 
	satisfies 
	\begin{equation}\label{correction2}
		\widetilde B_m^{\prime}t^{\prime}=-r^{\prime}_1 = - \widetilde Y_m^*r_1.
	\end{equation} 
Let $\tilde t^{\prime}_j$ be the approximate solution to \eqref{correction2} generated at the $j$th MINRES iteration using the starting guess $\tilde t^{\prime}_0=\widetilde Y_m^*\tilde t_0$,   $j=1,2,\dots,n-m$. Then $\tilde t_j=\widetilde Y_m\tilde t^{\prime}_j$,  
and 
\begin{equation*} 
		\qquad \qquad  	
		\|r_{\mathrm{in}, j}\|=\|r_{\mathrm{in}, j}^{\prime}\|=\|\widetilde B_m^{\prime}\tilde t_j^{\prime}+r^{\prime}_1\|,
		\qquad j=1,2,\dots,n-m.
	\end{equation*}  
\end{theorem}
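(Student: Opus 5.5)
The plan is to use the orthogonal decomposition $\mathbb{C}^n=\spans\{\widetilde X_m\}\oplus\spans\{\widetilde Y_m\}$. Since $[\widetilde X_m,\widetilde Y_m]$ is unitary, $\widetilde Y_m\widetilde Y_m^*=I-\widetilde X_m\widetilde X_m^*$ and $\widetilde Y_m^*\widetilde Y_m=I$. The proof then splits into three steps.

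First, the reduced equation. Any $t\perp\widetilde X_m$ can be written uniquely as $t=\widetilde Y_m t'$ with $t'=\widetilde Y_m^*t$. By \eqref{Mm},
\[
\widetilde B_m t=\widetilde Y_m\widetilde B_m^{\prime}t'.
\]
Since $r_1\perp\widetilde X_m$ by \eqref{extraction}, we also have $r_1=\widetilde Y_m\widetilde Y_m^*r_1=\widetilde Y_m r_1'$. Hence \eqref{correctionm} becomes $\widetilde Y_m(\widetilde B_m^{\prime}t'+r_1')=0$. Because $\widetilde Y_m$ has orthonormal columns, this is equivalent to \eqref{correction2}. Uniqueness of $t$ within $\widetilde X_m^\perp$ is thereby transferred to $t'$.

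Second, the MINRES iterates. MINRES for \eqref{correctionm} started from $\tilde t_0\in\widetilde X_m^\perp$ minimizes $\|\widetilde B_m\tilde t+r_1\|$ over
\[
\tilde t_0+\mathcal{K}_j(\widetilde B_m,r_{\mathrm{in},0}), \qquad r_{\mathrm{in},0}=\widetilde B_m\tilde t_0+r_1 .
\]
By the first step, $r_{\mathrm{in},0}=\widetilde Y_m r_{\mathrm{in},0}'$ with $r_{\mathrm{in},0}'=\widetilde B_m^{\prime}\tilde t_0'+r_1'$. An induction using $\widetilde B_m^i\widetilde Y_m=\widetilde Y_m(\widetilde B_m^{\prime})^i$, which follows from $\widetilde Y_m^*\widetilde Y_m=I$, gives
\[
\mathcal{K}_j(\widetilde B_m,r_{\mathrm{in},0})=\widetilde Y_m\,\mathcal{K}_j(\widetilde B_m^{\prime},r_{\mathrm{in},0}').
\]
So the affine search space for \eqref{correctionm} is the image under $\widetilde Y_m$ of the search space for \eqref{correction2}. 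Moreover, for $\tilde t=\widetilde Y_m\tilde t'$ in that space, $\|\widetilde B_m\tilde t+r_1\|=\|\widetilde Y_m(\widetilde B_m^{\prime}\tilde t'+r_1')\|=\|\widetilde B_m^{\prime}\tilde t'+r_1'\|$ by the isometry of $\widetilde Y_m$. The two minimization problems are therefore identical under the bijection $\tilde t'\mapsto\widetilde Y_m\tilde t'$. It follows that $\tilde t_j=\widetilde Y_m\tilde t_j'$, and the residual norms coincide for all $j$.

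Third, the only delicate point. Uniqueness of the minimizer is needed to identify the iterates, not just the minimal values. In exact arithmetic MINRES uniquely determines $\widetilde B_m^{\prime}\tilde t_j'$. The iterate itself is unique as long as $\widetilde B_m^{\prime}$ is nonsingular on the Krylov space, which holds under the standing assumption that \eqref{correctionm} has a unique solution. Alternatively, I would argue at the level of the Lanczos process: the Lanczos vectors for $\widetilde B_m$ are $\widetilde Y_m$ times those for $\widetilde B_m^{\prime}$, and they share the same tridiagonal matrix. MINRES then computes identical coefficient vectors in both cases, so the iterates correspond exactly. Either way, the range dimension $n-m$ of $\widetilde Y_m$ bounds the iteration count, which gives $j\le n-m$.
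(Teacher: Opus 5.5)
Your proposal is correct and takes essentially the same approach as the paper. The paper omits this proof and refers to the analogous derivation in \cite[Theorem~3.3]{huang2026preconditioning}; your argument is that standard derivation: the factorization $\widetilde B_m=\widetilde Y_m\widetilde B_m^{\prime}\widetilde Y_m^*$, the Krylov-space identity $\mathcal{K}_j(\widetilde B_m,r_{\mathrm{in},0})=\widetilde Y_m\mathcal{K}_j(\widetilde B_m^{\prime},r_{\mathrm{in},0}^{\prime})$, and the isometry of $\widetilde Y_m$ to equate residual norms, with a careful extra step identifying the iterates themselves (via nonsingularity of $\widetilde B_m^{\prime}$ or the coinciding Lanczos recurrences).
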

 
Theorem~\ref{thm1} establishes the convergence equivalence between MINRES applied 
to \eqref{correctionm} and \eqref{correction2}.  
For the sake of brevity, there is no loss of generality in assuming that 
the starting vectors $\tilde t_0=\bm{0}$ and $\tilde t_0^{\prime}=\bm{0}$. 
Then the initial residuals reduce to 
$r_{\mathrm{in},0} = r_1^{\prime}$ and $r_{\mathrm{in},0}^{\prime} = r_1^{\prime}$.
%\begin{equation*}%\label{initial}
%r_{\mathrm{in},0} = r_1^{\prime}
%\quad\mbox{and}\quad
%r_{\mathrm{in},0}^{\prime} = r_1^{\prime}.
%\end{equation*} 
Let $\tilde\sigma_1,\dots,\tilde\sigma_{n-m}$ be the eigenvalues of  $\widetilde B_m^{\prime}$. 
It is known from  
\cite[p.~51]{greenbaum1997iterative} that the relative residual norm 
of $\tilde t_j^{\prime}$ satisfies %the min-max bound 
\begin{equation}\label{minmax}
	\frac{\|r_{\mathrm{in},j}^{\prime}\|}{\|r^{\prime}_1\|}
	\leq\min_{\pi\in\Pi_{j}}\max_{1\leq i\leq n-m} |\pi(\tilde\sigma_i)|,
\end{equation}
where $\Pi_j$ denotes the class of polynomials $\pi$ of degree at most 
$j$, normalized such that $\pi(0)=1$. 
Combining Theorem~\ref{thm1} and \eqref{minmax}, we introduce the following result. 

\begin{theorem}[{\cite[Chapter~3]{greenbaum1997iterative}}]
	\label{thm2}
(\romannumeral1) If all the eigenvalues $\tilde\sigma_1,\dots,\tilde\sigma_{n-m}$ 
of $\widetilde B_m^{\prime}$ lie in either the interval 
$[\alpha,\beta]$ or $[-\beta,-\alpha]$ with $\beta>\alpha>0$, then 
\begin{equation}\label{est1}
	\frac{\|r_{\mathrm{in},j}\|}{\|r_1\|}\leq 2\left(1-\frac{2}{1+\sqrt{\beta}/\sqrt{\alpha}}\right)^{j};
\end{equation}
(\romannumeral 2) If those eigenvalues are located within 
$[-\beta_1,-\alpha_1]\cup[\alpha_2,\beta_2]$, where the positive constants $\alpha_i,\beta_i$, $i=1,2$ satisfy  
$\beta_1-\alpha_1=\beta_2-\alpha_2$, then 
\begin{equation}\label{est2}
	\frac{\|r_{\mathrm{in},j}\|}{\|r_1\|}\leq 2\left(
	1-\frac{2}{1+\sqrt{\beta_1\beta_2}/\sqrt{\alpha_1\alpha_2}}
	\right)^{\lbrack\frac{j}{2}\rbrack},
\end{equation}
where $\lbrack\frac{j}{2}\rbrack$ represents the integer part of $\frac{j}{2}$.
\end{theorem}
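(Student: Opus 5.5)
The plan is to combine Theorem~\ref{thm1} with the min-max bound \eqref{minmax}. Each estimate then follows once we exhibit one polynomial $\pi\in\Pi_j$, or one in $\Pi_{2[j/2]}\subseteq\Pi_j$, whose maximum modulus on the prescribed eigenvalue set is small. By Theorem~\ref{thm1}, $\|r_{\mathrm{in},j}\|=\|r'_{\mathrm{in},j}\|$. With $\tilde t_0=\bm 0$ we also have $\|r'_1\|=\|\widetilde Y_m^*r_1\|=\|r_1\|$, because $r_1\perp\widetilde X_m$ gives $r_1\in\spans\{\widetilde Y_m\}$. So the left-hand sides of \eqref{est1} and \eqref{est2} coincide with those of \eqref{minmax}. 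It therefore suffices to bound $\max_{\sigma\in S}|\pi(\sigma)|$ for a chosen $\pi$, where $S$ is the interval or union of intervals containing all the $\tilde\sigma_i$.

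For (i), take $S=[\alpha,\beta]$; the case $[-\beta,-\alpha]$ follows by the substitution $\sigma\mapsto-\sigma$. Use the shifted and scaled Chebyshev polynomial
\[
\pi(\sigma)=T_j\!\left(\frac{\beta+\alpha-2\sigma}{\beta-\alpha}\right)\Big/T_j\!\left(\frac{\beta+\alpha}{\beta-\alpha}\right),
\]
which satisfies $\pi(0)=1$. On $S$ its modulus is at most $1/T_j(\mu)$ with $\mu=(\kappa+1)/(\kappa-1)$ and $\kappa=\beta/\alpha$. We then use $T_j(\mu)\ge\frac12(\mu+\sqrt{\mu^2-1})^j$ together with the identity $\mu+\sqrt{\mu^2-1}=(\sqrt\kappa+1)/(\sqrt\kappa-1)$. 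This gives the bound $2\bigl((\sqrt\kappa-1)/(\sqrt\kappa+1)\bigr)^j=2\bigl(1-2/(1+\sqrt\kappa)\bigr)^j$, which is \eqref{est1}.

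For (ii), the standard device is to set $q(\sigma)=(\sigma+\beta_1)(\sigma-\beta_2)$ and take $\pi(\sigma)=p(q(\sigma))$ with $p$ of degree $[j/2]$. We need to check that $q$ maps $[-\beta_1,-\alpha_1]\cup[\alpha_2,\beta_2]$ into a single interval not containing $q(0)$.
\begin{itemize}
\item The condition $\beta_1-\alpha_1=\beta_2-\alpha_2$ makes both intervals map onto $[-(\beta_1-\alpha_1)(\beta_2+\alpha_1),\,0]$ and $[-(\alpha_2+\beta_1)(\beta_2-\alpha_2),\,0]$ respectively.
\item These images are identical.
\item On this common image $q\le0$, while $q(0)=-\beta_1\beta_2$ lies strictly below it.
\end{itemize}
A cleaner route is to rewrite in terms of $-q$. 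The values $-q(\sigma)$ lie in $[0,(\beta_1-\alpha_1)(\beta_2+\alpha_1)]$, and $-q(0)=\beta_1\beta_2$ exceeds the right endpoint. It may actually be easier to use $q(\sigma)=(\sigma+\alpha_1)(\sigma-\alpha_2)$. Its values on $S$ then lie in $[\alpha_1\alpha_2... ]$, or more precisely in $[0,\,\beta_1\beta_2-\alpha_1\alpha_2]$ after shifting.

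I expect this step, choosing the quadratic so that the image interval $[a,b]$ and the normalization point give exactly the ratio $b/a=\beta_1\beta_2/(\alpha_1\alpha_2)$, to be the main obstacle. My first attempt will be $q(\sigma)=-(\sigma+\alpha_1)(\sigma-\alpha_2)+\alpha_1\alpha_2$ composed appropriately, with $\pi(\sigma)=\tilde p(\sigma^2\text{-like quantity})$. The target is that $\pi$ becomes a polynomial in a variable ranging over $[\alpha_1\alpha_2,\beta_1\beta_2]$, normalized at $0$. Concretely, I will try $w(\sigma)=(\sigma+\beta_1-\alpha_2)\sigma$-type shifts: the equal-length condition should make $w(S)=[\alpha_1\alpha_2,\beta_1\beta_2]$ up to sign, with $w(0)=0$.

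Once such a $w$ is found, apply part (i) with $[\alpha,\beta]=[\alpha_1\alpha_2,\beta_1\beta_2]$ and degree $[j/2]$. This yields \eqref{est2}, since $\deg(p\circ w)=2[j/2]\le j$ and $p(w(0))=p(0)=1$.
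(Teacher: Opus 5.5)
Your reduction is correct and is the standard argument behind the cited result. By Theorem~\ref{thm1}, $\|r_{\mathrm{in},j}\|=\|r'_{\mathrm{in},j}\|$. Since $r_1\perp\widetilde X_m$, you also have $\|r_1'\|=\|\widetilde Y_m^*r_1\|=\|r_1\|$, so \eqref{minmax} applies to the stated ratio, and your Chebyshev argument for (i) is complete.

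Part (ii), however, is not proved. You flag the choice of quadratic as the main obstacle and never settle it, and the explicit candidates in your last paragraph fail:
\begin{itemize}
\item The map $\sigma\mapsto\sigma(\sigma+\beta_1-\alpha_2)$ vanishes at $\sigma=\alpha_2-\beta_1$. This point lies in $[-\beta_1,-\alpha_1]$ whenever $\beta_1-\alpha_1\ge\alpha_2$, so the normalization value $0$ would be attained on the spectrum and no decay estimate is possible.
\item The map $-(\sigma+\alpha_1)(\sigma-\alpha_2)+\alpha_1\alpha_2$ has a sign error. It equals $2\alpha_1\alpha_2$ at $\sigma=0$, because it is $2\alpha_1\alpha_2-w(\sigma)$ for the correct $w$ below, so it misses your own target $w(0)=0$.
\end{itemize}

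The map you want is
\[
w(\sigma)=(\sigma+\alpha_1)(\sigma-\alpha_2)+\alpha_1\alpha_2=\sigma(\sigma+\alpha_1-\alpha_2).
\]
This is also $q(\sigma)-q(0)$ for your first choice $q(\sigma)=(\sigma+\beta_1)(\sigma-\beta_2)$, because $\beta_1-\beta_2=\alpha_1-\alpha_2$. Your bullet computation was one line from the end. It gives $q(S)\subseteq[-(\beta_1-\alpha_1)(\beta_2+\alpha_1),0]$, and
\[
\beta_1\beta_2-(\beta_1-\alpha_1)(\beta_2+\alpha_1)=\alpha_1(\alpha_1+\beta_2-\beta_1)=\alpha_1\alpha_2,
\]
so $w(S)\subseteq[\alpha_1\alpha_2,\beta_1\beta_2]$ with $w(0)=0$.

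The factored form gives the same inclusion without the monotonicity your bullets use tacitly:
\begin{itemize}
\item For $\sigma\in[-\beta_1,-\alpha_1]$, $-\sigma\in[\alpha_1,\beta_1]$ and $-(\sigma+\alpha_1-\alpha_2)\in[\alpha_2,\beta_1-\alpha_1+\alpha_2]=[\alpha_2,\beta_2]$.
\item For $\sigma\in[\alpha_2,\beta_2]$, $\sigma+\alpha_1-\alpha_2\in[\alpha_1,\beta_2-\alpha_2+\alpha_1]=[\alpha_1,\beta_1]$.
\end{itemize}
The equal-length hypothesis is used exactly in these two inclusions.

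To finish, take $\pi=p\circ w$, where $p$ is your shifted Chebyshev polynomial from (i) of degree $[j/2]$ on $[\alpha_1\alpha_2,\beta_1\beta_2]$ with $p(0)=1$. Then $\pi\in\Pi_{2[j/2]}\subseteq\Pi_j$ and $\pi(0)=1$. Moreover, $\max_{\sigma\in S}|\pi(\sigma)|\le\max_{u\in[\alpha_1\alpha_2,\beta_1\beta_2]}|p(u)|$. The estimate from (i) with $\kappa=\beta_1\beta_2/(\alpha_1\alpha_2)$ then yields \eqref{est2}.
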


Theorem~\ref{thm2} indicates that the convergence of MINRES for \eqref{correctionm}
becomes faster as the ratio $\beta/\alpha$ or 
$(\beta_1\beta_2)/(\alpha_1\alpha_2)$ approaches one.  
In particularly, when \eqref{correctionm} is positive or negative 
definite, the convergence rate in \eqref{est1} reduces to  
$\frac{\sqrt{\kappa}-1}{\sqrt{\kappa}+1}$, where $\kappa
=\frac{\beta}{\alpha}$ is the condition number of 
$\widetilde B_m^{\prime}$. 
For the indefinite case, assume that the intervals
$[-\beta_1,-\alpha_1]$ and $[\alpha_2,\beta_2]$ are symmetric with respect to 
the origin, i.e., $\alpha_1=\alpha_2=\alpha$ and $\beta_1=\beta_2=\beta$. 
Then the convergence factor in \eqref{est2} 
is $\frac{\kappa-1}{\kappa+1}$, with an exponent of  $\lbrack\frac{j}{2}\rbrack$. 
These justify the significantly slower convergence behavior of MINRES when applied to indefinite correction equations, as opposed to their definite counterparts.
 
From Theorem~\ref{thm2}, in order to evaluate the convergence of MINRES for  \eqref{correctionm}, it is necessary to determine the spectral intervals $[\alpha,\beta]$, $[-\beta,-\alpha]$ or
$[-\beta_1,-\alpha_1]\cup[\alpha_2,\beta_2]$ for 
$\widetilde B_m^{\prime}$ in \eqref{Mm}. 
Observe that as $\widetilde X_m$ 
approaches $X_m$,  $\widetilde B_m^{\prime}$ converges to
\begin{equation}\label{Bm}
		B_m^{\prime}=Y_m^*(A-\tau I)Y_m=\widehat \Lambda_m-\tau I
\quad\mbox{with}\quad
  \widehat\Lambda_m=\diag\{\lambda_{m+1},\cdots,\lambda_n\},
	\end{equation}  
where the columns of $Y_m=[x_{m+1},\dots,x_n]$ form an orthonormal 
basis for $X_m^{\perp}$. 
Therefore, once $\widetilde X_m$ is sufficiently accurate, the eigenvalues 
$\sigma_1,\dots,\sigma_{n-m}$ of $B_m^{\prime}$, i.e., 
\begin{equation}\label{eigenvaluesm}
	\lambda_{m+1}-\tau,\lambda_{m+2}-\tau,\dots,\lambda_{n}-\tau,  
\end{equation}
reliably approximate those 
$\tilde\sigma_1,\dots,\tilde\sigma_{n-m}$ of $\widetilde B_m^{\prime}$.  
Specifically, denote by $\Phi_m$ 
the canonical angle matrix \cite[p.~329]{golub2012matrix} 
between the range spaces of $\widetilde X_m$ and $X_m$. 
To rigorously quantify how closely $B_m^{\prime}$ approximates 
$\widetilde B_m^{\prime}$, 
we establish the following result.  

\begin{theorem}\label{thm3} 
	The matrix $\widetilde B_m^{\prime}$ is orthogonally similar to $B_m^{\prime}$ 
	up to the error term 
	\begin{equation}\label{similarity2}
		\|\widetilde B_m^{\prime}-W^*B_m^{\prime}W\|\leq 3|\lambda_n-\tau|\|\sin\Phi_m\|^2, 
	\end{equation}
	where $W\in\mathbb{C}^{(m-n)\times(m-n)}$ is a unitary matrix; see \eqref{defW}.  
\end{theorem}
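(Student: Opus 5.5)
Throughout, $s:=\|\sin\Phi_m\|$ and all matrices below are $(n-m)\times(n-m)$ (the statement's $(m-n)$ should read $(n-m)$). The plan is to express $\widetilde Y_m$ in the exact eigenbasis $[X_m,Y_m]$ and then take $W$ to be the unitary polar factor of the ``cosine block''.

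\textbf{Step 1: decomposition of $\widetilde B_m^{\prime}$.} Since $[X_m,Y_m]$ is unitary, write
\begin{equation*}
	\widetilde Y_m=Y_mC+X_mS,\qquad C=Y_m^*\widetilde Y_m,\qquad S=X_m^*\widetilde Y_m .
\end{equation*}
The range of $\widetilde Y_m$ is $\widetilde X_m^{\perp}$ and the range of $Y_m$ is $X_m^{\perp}$. The nonzero canonical angles between these two complements coincide with those between the ranges of $\widetilde X_m$ and $X_m$. Hence the singular values of $C$ are the cosines $\cos\phi_i$ (padded with ones), and $C^*C+S^*S=I$ gives $\|S\|=s$.

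Because $A-\tau I$ is block diagonal in the basis $[X_m,Y_m]$, the cross terms vanish. Using \eqref{Bm} this gives
\begin{equation*}
	\widetilde B_m^{\prime}=\widetilde Y_m^*(A-\tau I)\widetilde Y_m
	=C^*B_m^{\prime}C+S^*(\Lambda_m-\tau I)S .
\end{equation*}
The second term has norm at most $|\lambda_m-\tau|\,s^2\le|\lambda_n-\tau|\,s^2$ by the ordering \eqref{order}.

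\textbf{Step 2: choice of $W$ (presumably \eqref{defW}).} Take the polar decomposition $C=WH$, with $W$ unitary and $H=(C^*C)^{1/2}=(I-S^*S)^{1/2}\succeq 0$. Then
\begin{equation*}
	C^*B_m^{\prime}C=HMH,\qquad M:=W^*B_m^{\prime}W,\qquad \|M\|=\|B_m^{\prime}\|\le|\lambda_n-\tau| .
\end{equation*}
The eigenvalues of $H$ are the cosines $c_i\in[0,1]$. Put $E=I-H$, so that $\|E\|=1-c_{\min}$, where $c_{\min}=\cos\phi_{\max}$.

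\textbf{Step 3: perturbation bound.} This is the only place needing care, since a naive expansion $HMH-M=-EM-ME+EME$ gives a constant that blows up when $c_{\min}<1/3$. Instead, I will split as
\begin{equation*}
	HMH-M=(H-I)MH+M(H-I).
\end{equation*}
This gives
\begin{equation*}
	\|HMH-M\|\le\|E\|\,(\|H\|+1)\,\|M\|\le(1-c_{\min})(1+c_{\min})\|M\|=s^2\|M\|,
\end{equation*}
using $\|H\|\le1$, $\|E\|=1-c_{\min}$, and $1-c_{\min}^2=\sin^2\phi_{\max}=s^2$.

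\textbf{Conclusion.} Combining Steps 1--3,
\begin{equation*}
	\|\widetilde B_m^{\prime}-W^*B_m^{\prime}W\|\le s^2|\lambda_n-\tau|+s^2|\lambda_n-\tau|=2|\lambda_n-\tau|\,s^2\le 3|\lambda_n-\tau|\,s^2,
\end{equation*}
which is \eqref{similarity2}.

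The main obstacle is the identification in Step 1: the singular values of $C$ are the cosines of the angles between the complements, and these angles equal $\Phi_m$. I would justify this through the CS decomposition of the unitary matrix $[X_m,Y_m]^*[\widetilde X_m,\widetilde Y_m]$. Its off-diagonal blocks $X_m^*\widetilde Y_m$ and $Y_m^*\widetilde X_m$ share the same nonzero singular values $\sin\phi_i$.
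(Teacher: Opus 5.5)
Your decomposition and choice of $W$ are the paper's. Your $C,S$ are its $E,F$. Your polar factor $W$ of $C$ is exactly the paper's $W=UV^*$ from the SVD $C=U\Sigma V^*$, with $H=V\Sigma V^*$. Your splitting $HMH-M=(H-I)MH+M(H-I)$ is the paper's $(\Sigma-I)U^*B_m^{\prime}U\Sigma+U^*B_m^{\prime}U(\Sigma-I)$ conjugated by $V$.

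\textbf{The error is in the estimate that follows.} Since $\|H\|=c_{\max}$, the inequality $(1-c_{\min})(1+\|H\|)\le(1-c_{\min})(1+c_{\min})$ goes the wrong way; $c_{\max}$ equals $1$ whenever $n-m>m$, because of the padded ones. Using $\|H\|\le 1$, as you say you do, only gives $2(1-c_{\min})$.

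The intermediate bound $\|HMH-M\|\le s^2\|M\|$ is in fact false. Take $H=\diag\{1,0\}$ and $M=\frac{1}{\sqrt3}\left[\begin{array}{cc}-1&\sqrt2\\ \sqrt2&1\end{array}\right]$. Then $\|M\|=1$ and $s^2=1-c_{\min}^2=1$, yet $HMH-M$ has eigenvalues $1/\sqrt3$ and $-2/\sqrt3$, so $\|HMH-M\|=2/\sqrt3>1$. This is not an artifact of a right angle: $H=\diag\{1,1/5\}$ and $M=\frac14\left[\begin{array}{cc}-3&\sqrt7\\ \sqrt7&3\end{array}\right]$ give $\|M\|=1$ but $\|HMH-M\|=1>24/25=s^2\|M\|$. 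So your derivation of the constant $2$ does not stand.

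\textbf{The repair is one line and reproduces the paper's proof.} From your splitting, $\|HMH-M\|\le 2\|I-H\|\|M\|=2(1-c_{\min})\|M\|\le 2(1-c_{\min}^2)\|M\|=2s^2\|M\|$, because $0\le c_{\min}\le 1$. That last inequality is the paper's $\|I-\Sigma\|\le\|I-\Sigma^*\Sigma\|=\|F\|^2$. Adding $\|S^*(\Lambda_m-\tau I)S\|\le|\lambda_n-\tau|s^2$ gives exactly $3|\lambda_n-\tau|s^2$.

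The theorem therefore holds, but only with the stated constant $3$. If you want something sharper, keeping $1-c_{\min}=s^2/(1+c_{\min})$ gives $(\frac{2}{1+c_{\min}}\|B_m^{\prime}\|+\|\Lambda_m-\tau I\|)s^2$. That approaches $2|\lambda_n-\tau|s^2$ only as $s\to 0$.

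Your aside about the naive expansion $-EM-ME+EME$ is also overstated. It does not blow up; it gives $(1-c_{\min})(3-c_{\min})\|M\|\le 3s^2\|M\|$, which is just a worse constant.
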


\begin{proof}
Since $[\widetilde X_m,\widetilde Y_m]$ and $[X_m,Y_m]$ are unitary, 
we can orthogonally decompose 
\begin{equation}\label{Ymdecomp}
		\widetilde Y_m = Y_mE+X_mF,
	\end{equation}  
	where $E\in\mathbb{C}^{(n-m)\times(n-m)}$ and 
	$F\in\mathbb{C}^{(n-m)\times m}$ satisfy $E^*E+F^*F=I$. 
By the definition of the sine of the canonical angles between 
	two subspaces, we have 
	\begin{equation}\label{sinpsi}
		\|\sin\Phi_m\|=\|\widetilde Y_m^*X_m\|=\|F\|. 
	\end{equation} 

Substituting 
\eqref{Ymdecomp} into the second relation of \eqref{Mm}, by  \eqref{eigA} and \eqref{Bm}, we obtain
\begin{equation}\label{Bmtilde} 
	\widetilde B_m^{\prime}=(Y_mE+X_mF)^*(A-\tau I)(Y_mE+X_mF)
	=E^*B_m^{\prime}E + F^*(\Lambda_m-\tau I)F,
\end{equation} 
where $\Lambda_m$ is defined in \eqref{block}. 
Let $E=U\Sigma V^*$ be the SVD of $E$, where 
 $U,V\in\mathbb{C}^{(n-m)\times(n-m)}$ are 
unitary, and the diagonal  $\Sigma\in\mathbb{R}^{(n-m)\times(n-m)}$ 
has non-negative diagonal elements. 
It follows from $E^*E+F^*F=I$ that $\|\Sigma\|=\|E\|\leq 1$ and 
\begin{equation}\label{Isigma}
	\|I-\Sigma\|\leq\|I-\Sigma^*\Sigma\|
	=\|I-E^*E\|=\|F^*F\|=\|F\|^2
	=\|\sin\Phi_m\|^2. 
\end{equation} 
Introduce the unitary matrix
\begin{equation}\label{defW}
W=UV^*\in\mathbb{C}^{(n-m)\times(n-m)}.
\end{equation} 
Subtracting $W^*B_m^{\prime}W$ from both sides of \eqref{Bmtilde}, 
taking the $2$-norms on both sides, and applying 
\eqref{sinpsi} and \eqref{Isigma}--\eqref{defW}, we obtain
	\begin{eqnarray*}
		\|\widetilde B_m^{\prime}-W^*B_m^{\prime}W\|
		&=&\|V\Sigma U^*B_m^{\prime}U\Sigma V^*-VU^*B_m^{\prime}UV^*
		+ F^*(\Lambda_m-\tau I)F\| \nonumber\\
		&\leq&\|(\Sigma-I) U^*B_m^{\prime}U\Sigma+U^*B_m^{\prime}U(\Sigma-I)\|
		+ \|F^*(\Lambda_m-\tau I)F\| \nonumber\\
		&\leq&\|\Sigma-I\|\|B_m^{\prime}\| +\|B_m^{\prime}\|\|\Sigma-I\|
		+  \|\Lambda_m-\tau I\|\|F\|^2 \nonumber\\
		&\leq&(2\|B_m^{\prime}\|+\|\Lambda_m-\tau I\|)\|\sin\Phi_m\|^2. %\label{simi}
	\end{eqnarray*} 
Bound \eqref{similarity2} then follows by observing from \eqref{order},  \eqref{block} and \eqref{Bm} that 
$	\|\Lambda_m-\tau I\| 
		\leq|\lambda_n-\tau|$ and $\|B_m^{\prime}\| 
		\leq|\lambda_n-\tau|$.  
\end{proof}

Since $W$ is unitary, 
$W^*B_m^{\prime}W$ shares the same eigenvalues 
$\sigma_1,\dots,\sigma_{n-m}$ of $B_m$ listed in \eqref{eigenvaluesm}. 
Suppose that the eigenvalues $\sigma_i$ and $\tilde\sigma_i$ of $B_m$ 
and $\widetilde B_m$ are arranged in the same (ascending or descending) order. 
Combining Theorem~\ref{thm3} with a classical eigenvalue perturbation theory
\cite[Theorem 10.3.1]{parlett1998symmetric}, we obtain
\begin{equation}\label{eigenbound}
	|\tilde\sigma_i-\sigma_i|
	\leq  \|\widetilde B_m^{\prime}-W^*B_m^{\prime}W\| 
	\leq 3|\lambda_n-\tau|\|\sin\Phi_m\|^2.
\end{equation}

Utilizing Theorems~\ref{thm2} and \ref{thm3}, we are now in a position to 
establish the convergence properties of MINRES for \eqref{correctionm}.
 
\begin{theorem}\label{thm4} 
	Assume that $\widetilde X_m$ is a sufficiently accurate approximation to $X_m$ such that 
	\begin{equation}\label{conditionm}
		\delta_m:=3|\lambda_n-\tau|\|\sin\Phi_m\|^2<|\lambda_{m+1}-\tau|.
	\end{equation}  
(\romannumeral1) If $\tau>\dfrac{\lambda_{\max}+\lambda_{\max,m+1}}{2}$, 
where $\lambda_{\max}$ and $\lambda_{\max,m+1}$ are respectively the $1$st 
and $(m+1)$th largest eigenvalues of $A$, then  
\begin{equation}\label{convbound1}
	\frac{\|r_{\mathrm{in},j}\|}{\|r_1\|} \leq 2\left(
	1-\frac{2}{1+\sqrt{(\tau-\lambda_{\min}+\delta_m)
			/(\tau-\lambda_{\max,m+1}-\delta_m)}}\right)^j;
\end{equation} 
(\romannumeral2) If $\tau<\dfrac{\lambda_{\min}+\lambda_{\min,m+1}}{2}$, 
where $\lambda_{\min}$ and $\lambda_{\min,m+1}$ are respectively the $1$st and $(m+1)$th smallest eigenvalues of $A$, then 
\begin{equation}\label{convbound2}
	\frac{\|r_{\mathrm{in},j}\|}{\|r_1\|} \leq 2\left(
	1-\frac{2}{1+\sqrt{(\lambda_{\max}-\tau+\delta_m)
			/(\lambda_{\min,m+1}-\tau-\delta_m)}}\right)^j;
\end{equation}
(\romannumeral3) If $\dfrac{\lambda_{\min}+\lambda_{\min,m+1}}{2}<
\tau<\dfrac{\lambda_{\max}+\lambda_{\max,m+1}}{2}$, then     
\begin{equation}\label{convbound3}
	\frac{\|r_{\mathrm{in},j}\|}{\|r_1\|} \leq 2\left(
	1-\frac{2}{1+ (|\lambda_{n}-\tau|+\delta_m)
		/(|\lambda_{m+1}-\tau|-\delta_m)}
	\right)^{\lbrack\frac{j}{2}\rbrack},
\end{equation}
where $\lambda_{n}$ is the eigenvalue further from the target $\tau$ 
between $\lambda_{\max}$ and $\lambda_{\min}$.   
\end{theorem}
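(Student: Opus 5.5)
The plan is to combine the eigenvalue localization \eqref{eigenbound} with the MINRES bounds of Theorem~\ref{thm2}. Everything reduces to locating the spectrum $\tilde\sigma_1,\dots,\tilde\sigma_{n-m}$ of $\widetilde B_m^{\prime}$ in one or two intervals that exclude the origin.

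First, I record that the eigenvalues $\sigma_i$ of $B_m^{\prime}$ are $\lambda_{m+1}-\tau,\dots,\lambda_n-\tau$ by \eqref{eigenvaluesm}. Their moduli satisfy $|\lambda_{m+1}-\tau|\le|\sigma_i|\le|\lambda_n-\tau|$ by \eqref{order}. By \eqref{eigenbound}, each $\tilde\sigma_i$ lies within $\delta_m$ of the corresponding $\sigma_i$. Condition \eqref{conditionm} then guarantees that $\tilde\sigma_i$ keeps the sign of $\sigma_i$ and that
\[
|\lambda_{m+1}-\tau|-\delta_m\le|\tilde\sigma_i|\le|\lambda_n-\tau|+\delta_m .
\]
In particular, $\widetilde B_m^{\prime}$ is nonsingular, and its inertia coincides with that of $B_m^{\prime}$.

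Next, I split into the three cases by determining which eigenvalues of $A$ are removed, i.e., which lie among the $m$ closest to $\tau$.

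In case (i), $\tau$ exceeds the midpoint of $\lambda_{\max}$ and $\lambda_{\max,m+1}$. I will argue that the $m$ eigenvalues closest to $\tau$ are exactly the $m$ largest ones. Any eigenvalue at most $\lambda_{\max,m+1}$ is farther from $\tau$ than the top $m$: for those $\lambda\ge\lambda_{\max,m}$, either $\lambda\le\tau$ gives $\tau-\lambda<\tau-\lambda_{\max,m+1}$, or $\lambda>\tau$ gives $\lambda-\tau\le\lambda_{\max}-\tau<\tau-\lambda_{\max,m+1}$ by the midpoint assumption. Hence every remaining $\sigma_i=\lambda-\tau$ is negative, lying in $[\lambda_{\min}-\tau,\lambda_{\max,m+1}-\tau]$. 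After perturbation, all $\tilde\sigma_i$ lie in $[-\beta,-\alpha]$ with $\beta=\tau-\lambda_{\min}+\delta_m$ and $\alpha=\tau-\lambda_{\max,m+1}-\delta_m$. Here $\alpha>0$ because $\tau-\lambda_{\max,m+1}=|\lambda_{m+1}-\tau|>\delta_m$. Theorem~\ref{thm2}(i) then gives \eqref{convbound1}.

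Case (ii) is the mirror image, with all remaining $\sigma_i$ positive, and yields \eqref{convbound2}.

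In case (iii), the remaining spectrum may contain both signs. Here I use the symmetric enclosure $[-\beta,-\alpha]\cup[\alpha,\beta]$ with $\alpha=|\lambda_{m+1}-\tau|-\delta_m$ and $\beta=|\lambda_n-\tau|+\delta_m$, which is legitimate by the modulus bounds from the first step. This choice automatically satisfies the equal-length hypothesis $\beta_1-\alpha_1=\beta_2-\alpha_2$ of Theorem~\ref{thm2}(ii). Then $\sqrt{\beta_1\beta_2}/\sqrt{\alpha_1\alpha_2}=\beta/\alpha$, which gives \eqref{convbound3}. Finally, Theorem~\ref{thm1} transfers the residual norms from \eqref{correction2} back to \eqref{correctionm}.

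The main obstacle is the combinatorial step in case (i) (and (ii)): proving that the midpoint condition forces the $m$ eigenvalues nearest $\tau$ to be exactly the $m$ largest, so that no removed eigenvalue lies below $\lambda_{\max,m+1}$. Ties are excluded by \eqref{order}. A second point to check is that \eqref{eigenbound} genuinely pairs the ordered eigenvalues, so that the sign preservation from the first step follows cleanly from Weyl's theorem. I would verify this by counting: Weyl's theorem says the number of $\tilde\sigma_i$ in $(-\infty,-\alpha]$ equals the number of $\sigma_i$ there, since no eigenvalue can cross the gap $(-|\lambda_{m+1}-\tau|+\delta_m,\,|\lambda_{m+1}-\tau|-\delta_m)$.
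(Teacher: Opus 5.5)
Your proposal is correct and follows the paper's proof essentially step for step. You confine the spectrum of $\widetilde B_m^{\prime}$ via \eqref{eigenbound} and \eqref{conditionm} to $[-\beta,-\alpha]$, $[\alpha,\beta]$, or the symmetric union $[-\beta,-\alpha]\cup[\alpha,\beta]$ in the three cases, and then invoke Theorem~\ref{thm2}; your explicit check that the midpoint condition forces $\lambda_1,\dots,\lambda_m$ to be the $m$ largest (resp.\ smallest) eigenvalues, and your Weyl-based sign-preservation argument, fill in steps the paper only asserts.
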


\begin{proof}
(\romannumeral1) In this case, $\lambda_1,\dots,\lambda_m$ are the $m$ largest eigenvalues $\lambda_{\max}, \dots, \lambda_{\max,m}$ 
of $A$, while all the remaining $n-m$ eigenvalues are smaller than $\tau$, and 
$\lambda_{m+1}=\lambda_{\max,m+1}$, $\lambda_n=\lambda_{\min}$. 
Therefore, all the eigenvalues $\sigma_i$ of $B_m^{\prime}$ presented in 
\eqref{eigenvaluesm} are negative and fall within the interval  
$[\lambda_{\min}-\tau,\lambda_{\max,m+1}-\tau].$  
Relations \eqref{eigenbound}--\eqref{conditionm} imply that the 
perturbed eigenvalues $\tilde\sigma_i$ of $\widetilde B_m^{\prime}$ lie in the negative interval
$$
[-(\tau-\lambda_{\min})-\delta_m,-(\tau-\lambda_{\max,m+1})+\delta_m]. 
$$  
Substituting $\alpha=\tau-\lambda_{\max,m+1}-\delta_m$ 
and $\beta=\tau-\lambda_{\min}+\delta_m$ into \eqref{est1} yields \eqref{convbound1}. 
 
(\romannumeral2) In this case, $\lambda_1,\dots,\lambda_m$ are the 
$m$ smallest eigenvalues $\lambda_{\min},\dots,\lambda_{\min,m}$ of $A$. 
The other $n-m$ eigenvalues are larger than $\tau$, with 
$\lambda_{m+1}=\lambda_{\min,m+1}$ and $\lambda_n=\lambda_{\max}$. 
Then the eigenvalues $\sigma_i$ listed in \eqref{eigenvaluesm} are 
positive and lie in 
$[\lambda_{\min,m+1}-\tau,\lambda_{\max}-\tau].$   
In light of \eqref{eigenbound}--\eqref{conditionm},  
the eigenvalues $\tilde\sigma_i$ of $\widetilde B_m^{\prime}$ reside in the positive interval
$$
[\lambda_{\min,m+1}-\tau-\delta_m,\lambda_{\max}-\tau+\delta_m].
$$ 
Thus, setting $\alpha\!=\!\lambda_{\min,m+1}\!-\!\tau\!-\!\delta_m$ 
and $\beta\!=\!\lambda_{\max}\!-\!\tau\!+\!\delta_m$ in \eqref{est1} 
establishes \eqref{convbound2}. 

(\romannumeral3) Under this assumption, $\lambda_1,\dots,\lambda_m$ are 
interior eigenvalues of $A$, and, by \eqref{ordernew},  $|\lambda_{m+1}-\tau|\leq|\lambda_i-\tau|\leq|
\lambda_n-\tau|$ for $i=m+1,\dots,n$. Consequently, the eigenvalues 
$\sigma_i$ listed in \eqref{eigenvaluesm} distribute within the 
union of disjoint intervals  
$$ 
[-|\lambda_n-\tau|,-|\lambda_{m+1}-\tau|]
\cup[|\lambda_{m+1}-\tau|,|\lambda_n-\tau|]. 
$$
Exploiting \eqref{eigenbound}, the eigenvalues 
$\tilde\sigma_i$ of $\widetilde B_m$ are located within 
$$
[-|\lambda_n-\tau|-\delta_m,-|\lambda_{m+1}-\tau|+\delta_m]
\cup[|\lambda_{m+1}-\tau|-\delta_m,|\lambda_n-\tau|+\delta_m], 
$$
where $|\lambda_{m+1}-\tau|-\delta_m>0$ by \eqref{conditionm}.  
Therefore, substituting $\alpha_1=\alpha_2=|\lambda_{m+1}-\tau|-\delta_m$ and 
$\beta_1=\beta_2=|\lambda_n-\tau|+\delta_m$ into \eqref{est2} delivers \eqref{convbound3}.  
\end{proof}

\begin{remark}\label{remark1}	
By setting $m=1$, \eqref{correctionm} reduces to \eqref{correction}, 
and Theorem~\ref{thm3} provides the convergence 
results of MINRES for the standard correction equation \eqref{correction} when computing $\lambda_{\max},\lambda_{\min}$, and an interior eigenvalue of $A$, where the convergence rates respectively depend on 
	\begin{equation}\label{convrates}\small
		\gamma_1=\frac{\tau-\lambda_{\max,2}-\delta_1}
		{\tau-\lambda_{\min}+\delta_1},\qquad  
		\gamma_2=\frac{\lambda_{\min,2}-\tau-\delta_1}
		{\lambda_{\max}-\tau+\delta_1},\qquad
		\gamma_3=\frac{|\lambda_{2}-\tau|-\delta_1}
		{|\lambda_{n}-\tau|+\delta_1}. 
	\end{equation} 
Assume that $\tilde x_1$ is a sufficiently accurate approximation to the 
desired eigenvector $x_1$, implying that $\delta_1$ defined in 
\eqref{conditionm} is small. 
As the separation between $\tau$ and the adjacent eigenvalue 
$\lambda_{\max,{2}}$, $\lambda_{\min,2}$, or $\lambda_{2}$ increases, 
the corresponding convergence factors 
$1-\frac{2\sqrt{\gamma_1}}{1+\sqrt{\gamma_1}}$, 
$1-\frac{2\sqrt{\gamma_2}}{1+\sqrt{\gamma_2}}$ 
or $1-\frac{2\gamma_3}{1+\gamma_3}$ in 
\eqref{convbound1}--\eqref{convbound3} decreases, thereby 
accelerating MINRES convergence. 
Conversely, MINRES for \eqref{correction} may 
converge very slowly when $\lambda_1$ is poorly separated from those 
adjacent eigenvalues, as noted in section~\ref{sec:2}.    
\end{remark}

\begin{remark}\label{remark2} 
As indicated by \eqref{ordernew}, the distance $|\lambda_{m+1}-\tau|$ 
is considerably large, meaning that condition \eqref{conditionm} 
is fulfilled as soon as $\widetilde X_m$ is sufficiently close to $X_m$. 
In this case, Theorem~\ref{thm3} shows that when computing the 
largest, smallest or interior eigenpairs, the convergence rates 
of MINRES for \eqref{correctionm} respectively depend on 
\begin{equation}\label{convrates2}\small
		\gamma_1^{\prime}=\frac{\tau-\lambda_{\max,m+1}-\delta_m}
		{\tau-\lambda_{\min}+\delta_m},\qquad  
		\gamma_2^{\prime}=\frac{\lambda_{\min,m+1}-\tau-\delta_m}
		{\lambda_{\max}-\tau+\delta_m},\qquad
		\gamma_3^{\prime}=\frac{|\lambda_{m+1}-\tau|-\delta_m}
		{|\lambda_{n}-\tau|+\delta_m}. 
	\end{equation}  
For any sufficiently small $\delta_m\geq\delta_1>0$, these factors are 
obviously larger than their counterparts $\gamma_1$, $\gamma_2$ and 
$\gamma_3$ in \eqref{convrates}.
%, provided that $|\lambda_{m+1}-\tau|\gg 
%|\lambda_{2}-\tau|$. 
%Geometrically, this requires the $m$ clustered eigenvalues $\lambda_1, 
%\dots, \lambda_m$ to be well separated from the rest of the spectrum.  
Therefore, MINRES converges substantially faster for the new 
correction equation \eqref{correctionm} in JD-V than for the 
standard equation \eqref{correction} in JD,  as numerically 
corroborated by the following experiment.	 
\end{remark}

\begin{exper}\label{exper0}
Setting $\tau=0$ and $5$, we seek the smallest and 
an interior eigenpair $(\lambda_1,x_1)$, respectively, of a $10000\times10000$ 
diagonal matrix $A$, whose eigenvalues are uniformly distributed within four clusters of sizes $50$, $4950$, $50$, and $4950$:
%
%partitioned into four clusters. 
%Each cluster consists of uniformly distributed values, 
%with sizes $50$, $4950$, $50$, and $4950$, respectively:   
$$
\underbrace{0.0001,\dots,0.005}_{50}, \qquad
\underbrace{0.05,\dots,4.95}_{4950}, \qquad
\underbrace{5.0001,\dots,5.005}_{50}, \qquad
\underbrace{5.05,\dots,10}_{4950}.
$$  
For both cases, $m=50$, $|\lambda_1-\tau|=0.0001$ and $|\lambda_{m+1}-\tau|=0.05$ in \eqref{ordernew}.  
Suppose that $\sin\angle(\tilde x_1,x_1)=\|\sin\angle(\widetilde 
X_m,X_m)\|=0.01$, meaning that $\delta_1=\delta_m=0.003$ and $0.0015$ for 
$\tau=0$  and $5$, respectively.  
Figure~\ref{fig0} depicts the convergence curves of MINRES for solving  
\eqref{correctionm} and  \eqref{correction} with $\tau=0$ (left) 
and $5$ (right).  
\end{exper}

\begin{figure}[tbhp]
	\centering
	\includegraphics[width=0.49\textwidth]{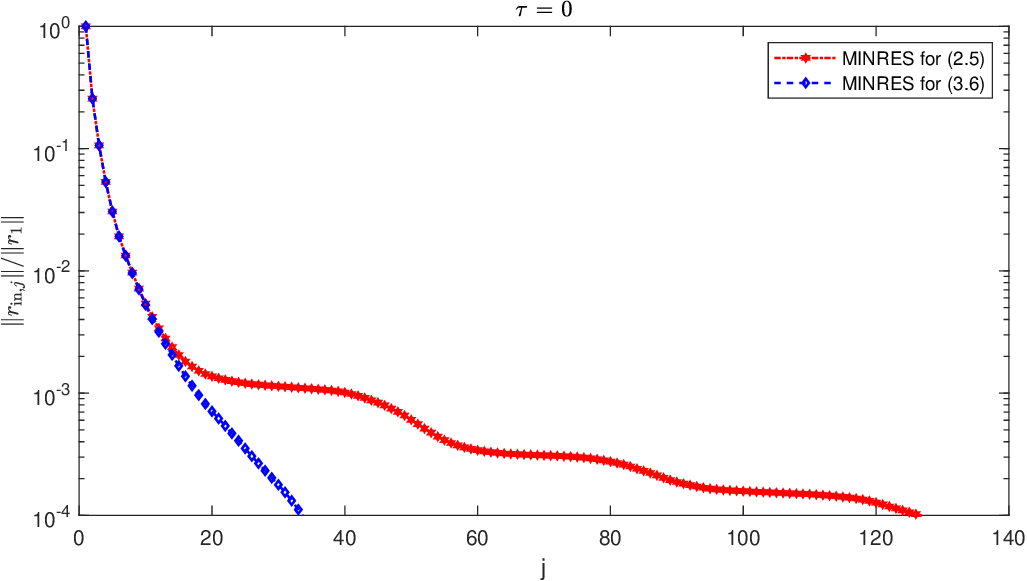}\hfill
	\includegraphics[width=0.49\textwidth]{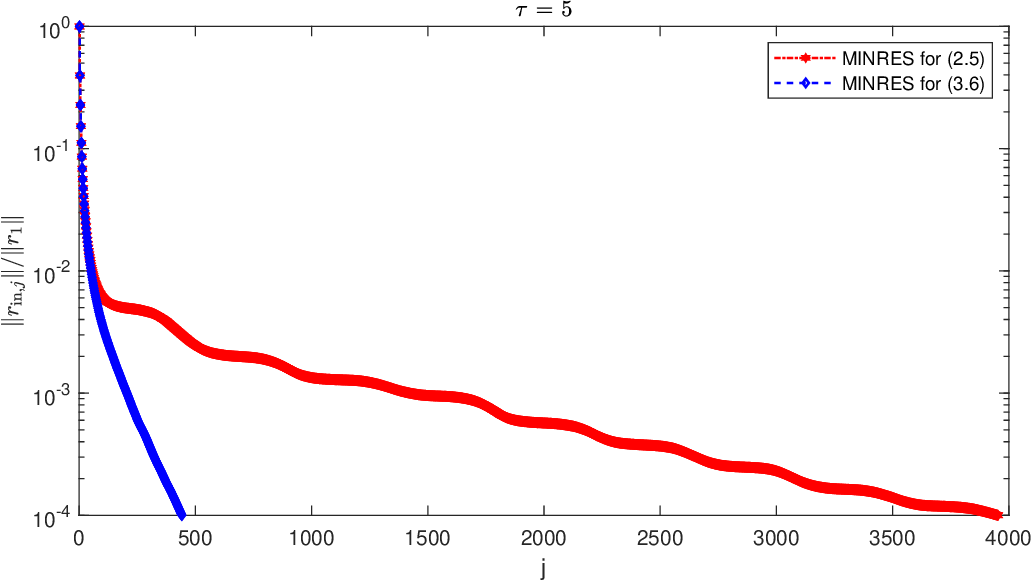} 	  
	\caption{Convergence curves of MINRES for solving \eqref{correction} and \eqref{correctionm}.}\label{fig0}
\end{figure}

Notably, Theorem~\ref{thm4} characterizes the worst-case 
convergence behavior of MINRES for \eqref{correctionm} and \eqref{correction}.  
As illustrated in the figure, while MINRES exhibits
comparable initial convergence for both equations, 
it ultimately converges significantly faster for 
\eqref{correctionm}.   
As a consequence, to achieve a moderate relative residual tolerance of $10^{-4}$, solving \eqref{correctionm} requires only 26.19\% and 11.18\% of the iterations needed for \eqref{correction} when $\tau=0$ and $5$, respectively. 
 
\begin{remark}
When computing the largest, smallest, or an interior eigenvalue of $A$ (corresponding to the cases $\tau\approx\lambda_{\max}$, $\lambda_{\min}$ and $\lambda_{1}$, respectively), assuming $\delta_m$ is sufficiently small, the parameters in \eqref{convrates2} satisfy
	\begin{equation*} \small
		\gamma_1^{\prime}\approx\frac{\lambda_{\max}-\lambda_{\max,m+1}}
		{\lambda_{\max}-\lambda_{\min}},\qquad  
		\gamma_2^{\prime}\approx\frac{\lambda_{\min,m+1}-\lambda_{\min}}
		{\lambda_{\max}-\lambda_{\min}},\qquad
		\gamma_3^{\prime}\approx\frac{|\lambda_{m+1}-\lambda_{1}|}
		{|\lambda_{n}-\lambda_{1}|}.  
	\end{equation*}   
Utilizing $\frac{1}{2}(\lambda_{\max}-\lambda_{\min})\leq |\lambda_{n}-\lambda_{1}|\leq \lambda_{\max}-\lambda_{\min}$, 
we deduce $\frac{\gamma_3^{\prime}}{2}\lessapprox\gamma_1^{\prime}\approx\gamma_2^{\prime}\lessapprox\gamma_3^{\prime}$ 
%\begin{equation*} 
%	\frac{\gamma_3^{\prime}}{2}\lessapprox\gamma_1^{\prime}\approx\gamma_2^{\prime}\lessapprox\gamma_3^{\prime} 
%\end{equation*}
provided that   $\lambda_{\max}-\lambda_{\max,m+1}\approx\lambda_{\min,m+1}-\lambda_{\min}\approx |\lambda_{m+1}-\lambda_{1}|$.    
Given that $\gamma_1^{\prime}, \gamma_2^{\prime},\gamma_3^{\prime}$ are typically smaller than $1$, we have $\gamma_3^{\prime}\lessapprox 2 \gamma_1^{\prime} < \frac{2\sqrt{\gamma_1^{\prime}}}{1+\gamma_1^{\prime}}$ and 
 $\gamma_3^{\prime}\lessapprox 2 \gamma_2^{\prime}  < \frac{2\sqrt{\gamma_2^{\prime}}}{1+\gamma_2^{\prime}}$.  
Therefore, the theoretical convergence factors in  \eqref{convbound1}--\eqref{convbound3} satisfy 
$$1-\frac{2\sqrt{\gamma_1^{\prime}}}{1+\sqrt{\gamma_1^{\prime}}}\approx
1-\frac{2\sqrt{\gamma_2^{\prime}}}{1+\sqrt{\gamma_2^{\prime}}} < \left(1-\frac{2\gamma_3^{\prime}}{1+\gamma_3^{\prime}}\right)^{\frac{1}{2}}.$$ 
This indicates that MINRES for \eqref{correctionm} generally converges significantly faster for extreme eigenpairs than for interior ones, as fully corroborated by Figure~\ref{fig0}.  
\end{remark}
 
\subsection{Construction of new correction equations}\label{subsec:3}
The preceding analysis highlights the advantages of employing 
MINRES to solve the proposed correction equation \eqref{correctionm} 
over the standard \eqref{correction}, especially when $A$ has a cluster of eigenvalues near $\tau$ and sufficiently accurate approximations to the associated eigenvectors are available during the extraction phase.   
From a computational standpoint, it remains to establish rigorous 
numerical criteria for determining the cluster size $m$,  adaptively 
construct \eqref{correctionm}, and develop a practical JD-V algorithm. 

Given a reasonably small tolerance $\epsilon_1>0$, an eigenvalue 
$\lambda_i$ of $A$ is classified as being clustered near the target $\tau$ if  
\begin{equation}\label{pretol1}
	|\lambda_i-\tau|\leq\max\{|\tau|,1\}\cdot\epsilon_1.   
\end{equation}
By \eqref{ordernew}, $m$ is uniquely defined with  $|\lambda_{m}-\tau| \leq \max\{|\tau|,1\} \cdot 
\epsilon_1 <|\lambda_{m+1}-\tau|$. 
At the $k$th outer iteration of JD-V, we identify a Ritz 
value $\theta_i$ ($i=2,\dots,k$) obtained from 
\eqref{projection}--\eqref{Ritzpair} as an approximation to a clustered 
eigenvalue if
\begin{equation}\label{select1}
	|\theta_i-\tau|\leq\max\{|\tau|,1\}\cdot\epsilon_1. 
\end{equation}

Recall from section~\ref{subsec:1} that for \eqref{correctionm} and 
\eqref{correction} to yield equally effective subspace expansions, 
the higher order term $R_{2:m}\widetilde X_{2:m}^*t$ omitted from 
\eqref{derive1} must be negligible compared to $-r_1$. 
In light of \eqref{R_3}, this requires the block residual $R_{2:m}$  
to be sufficiently small.  
Furthermore, Theorem~\ref{thm4} dictates that for the theoretical bounds  
\eqref{convbound1}--\eqref{convbound3} to be meaningful, and for MINRES to converge significantly faster for \eqref{correctionm} than for
\eqref{correction}, the parameter $\delta_m$ defined in \eqref{conditionm} 
must be adequately small. 
Together, these conditions imply that the approximate eigenvectors $\tilde x_2,\dots,\tilde x_m$ participating in \eqref{correctionm} 
must have at least moderate accuracy. 
Therefore, if the residual $r_i$ defined in \eqref{extraction} of a Ritz pair $(\theta_i,\tilde x_i)$ satisfies 
\begin{equation}\label{select2} 
	\|r_i\|\leq\|A\|_1 \cdot\epsilon_2, 
\end{equation}
for a moderately small threshold $\epsilon_2$,
we accept $(\theta_i,\tilde x_i)$ as a reliable approximation to an eigenpair of $A$. In practice, it suffices to employ 
$\epsilon_2\gg \varepsilon_{\mathrm{out}}$, 
the outer stopping tolerance 
$\varepsilon_{\mathrm{out}}$ in \eqref{converg}. 
For example, when $\varepsilon_{\mathrm{out}}\in[10^{-14},10^{-8}]$,  numerical experiments indicate that $\epsilon_2\in[10^{-3},10^{-2}]$ serves as a robust practical choice. 

Assuming that there are $\widetilde m-1$ indices $i_2,\dots,i_{\widetilde m}$ from $2,\dots,k$ fulfilling both \eqref{select1} and \eqref{select2}, 
we construct
\begin{equation}\label{tildeX}
	\widetilde X_{\widetilde m}=
	[\tilde x_1,\tilde x_{i_2},\dots,\tilde x_{i_{\widetilde m}}], 
\end{equation}    
and  replace $\widetilde X_m$ with $\widetilde X_{\widetilde m}$ when 
forming  \eqref{correctionm}.

\begin{remark}\label{remark9}
For a fixed $\epsilon_2\gg\varepsilon_{\mathrm{out}}$, the selected 
approximate eigenvectors $\tilde x_{i_2},\dots$, 
$\tilde x_{i_{\widetilde m}}$ may have poorer accuracy than 
$\tilde x_1$ as the outer iterations proceed, particularly when 
$\varepsilon_{\mathrm{out}}<\frac{\|r_1\|}{\|A\|_1} 
\ll \frac{\|r_i\|}{\|A\|_1}\leq\epsilon_2$	
for some $i\in\{i_2,\dots,i_{\widetilde m}\}$. 
In this case, $\|R_{i_2:i_{\widetilde m}}\|$ can significantly exceed 
$\|r_1\|$. Consequently, omitting $R_{i_2:i_{\widetilde m}}\widetilde 
X_{i_2:i_{\widetilde m}}^*t$ from \eqref{derive1} renders the solution to 
the resulting correction equation \eqref{correctionm} marginally less 
effective in expanding subspace than that of \eqref{correction}. 
As a result, JD-V may converge slightly slower than JD, as demonstrated in Figure~\ref{fig0b}. 
As a part of Experiment~\ref{exper1}, the figure plots the outer convergence histories of both methods for computing the largest (left), smallest (middle), and an interior (right) eigenpair of the ``barth'' matrix  from the SuiteSparse 
Matrix Collection \cite{davis2011university}, using the shifts  $\tau=7.4$, $-2.3$, and $0$, respectively.   
Ultimately, such a marginal delay in outer convergence is acceptable, given that the overall efficiency is substantially improved. 
\end{remark}

\begin{figure}[tbhp]
	\centering
	\includegraphics[width=0.32\textwidth]{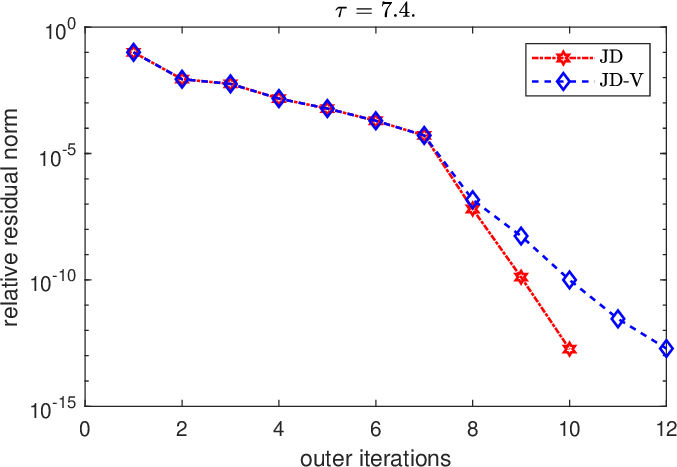}\hfill 
	\includegraphics[width=0.32\textwidth]{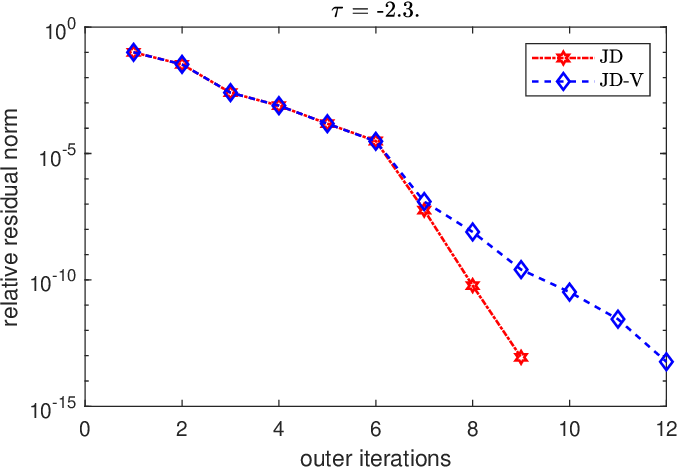}\hfill 
	\includegraphics[width=0.32\textwidth]{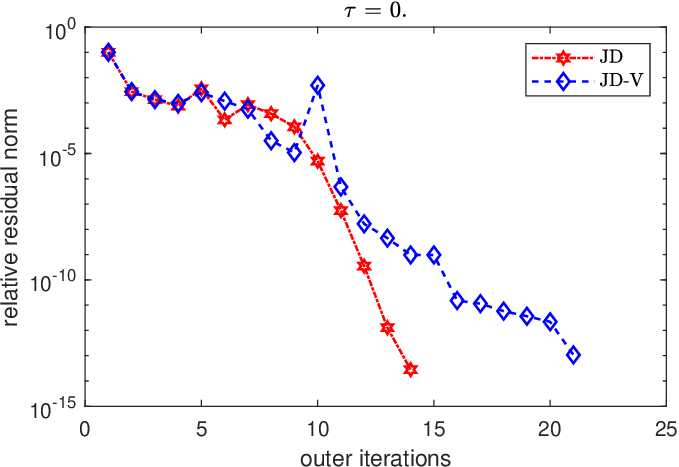} 
	\caption{Computing one eigenpair of ``barth'' using JD-V and JD.}\label{fig0b}
\end{figure}

\begin{remark}
Given $\epsilon_2>\varepsilon_{\mathrm{out}}$, the choice of $\epsilon_1>0$ is critical. 
While a larger $\epsilon_1$ expands the cluster size 
$\widetilde m$ in \eqref{tildeX} and 
accelerates MINRES convergence for \eqref{correctionm}, 
it simultaneously increases the cost of matrix-vector 
multiplications involving the coefficient matrix $\widetilde B_{\widetilde m}$ per MINRES iteration, particularly for sparse $A$.
To balance this trade-off, we empirically recommend choosing   $\epsilon_1\in[0.01, 0.05]$. 
\end{remark}

\begin{remark}\label{remark3}
Two extreme cases are $\epsilon_1=\epsilon_2=0$ and 
$\epsilon_1=\epsilon_2=+\infty$. 
In the first case, $\widetilde{m}\equiv m\equiv1$, and JD-V reduces to the 
standard JD method. In the second case, $\widetilde{m}=k$, meaning all 
Ritz vectors are incorporated into \eqref{correctionm}. 
The latter approach, equivalent to the one in \cite{genseberger1999alternative} and denoted as JD-V($+\infty$)  hereafter, is known to be computationally inefficient \cite{deSturler2002ImprovingTC,genseberger1999alternative}.  
This inefficiency arises because the \emph{entire} subspace $\spans\{\widetilde X_k\}$ is typically a poor approximation to any eigenspace of $A$. Hence, $R_{2:k}\widetilde X_{2:k}^*t$ dominates $-r_1$ in \eqref{derive1}, causing the solution of the relevant correction equation \eqref{correctionm} to deviate significantly from the standard correction \eqref{correction}.  
As a result, JD-V($+\infty$) expands the subspace ineffectively 
and demands significantly more outer iterations to 
converge than the standard JD and JD-V, a phenomenon confirmed numerically in Experiments~\ref{exper1}--\ref{exper2}.
\end{remark}

For $k$ small, there maybe no Ritz pair satisfy \eqref{select1}--\eqref{select2}. In this case, $\widetilde 
m=1$, and \eqref{correctionm} reduces to the standard \eqref{correction}. 
Nevertheless, for $m>1$, as $k$ grows, more Ritz pairs begin to satisfy 
these criteria. Consequently, incorporating multiple Ritz vectors 
($\widetilde m>1$) into \eqref{correctionm} enables JD-V to outperform JD in overall efficiency. Ultimately, $\widetilde m$ reaches $m$ 
as JD-V approaches convergence. 

\section{A thick-restart JD-V algorithm with deflation and purgation}\label{sec:4}
To compute $\ell>1$ eigenpairs of $A$ near $\tau$, 
we propose an efficient 
thick-restart JD-V algorithm incorporating adaptive shifts, deflation, and purgation. 

\subsection{Thick restart}\label{subsec:4}
For practical purpose, we adapt the thick-restart strategy introduced in  
\cite{huang2026preconditioning} for JDSVD-V method to the JD-V algorithm.  

Specifically, given the user-prescribed maximum and minimum subspace dimensions $k_{\max}$ and $k_{\min}$,  
when $k=k_{\max}$, we retain a restarting subspace $\XX_{\new}\subset \XX$ of dimension 
$$
k_{\new}=\max\{k_{\min},\widetilde m\},
$$
where $\widetilde m$ is as in \eqref{tildeX}.  
The orthonormal basis matrix $\widetilde X$ of $\XX_{\new}$ and the associated projected matrix $H$ are updated as follows: 
\begin{itemize}
	\item[(i)] If $\widetilde m<k_{\min}$, we set $\widetilde X=[\tilde x_1,\dots,\tilde x_{k_{\min}}]$ 
	and $H=\diag\{\theta_1,\dots,\theta_{k_{\min}}\}$; 
	\item[(ii)] If $\widetilde m\geq k_{\min}$, then $\widetilde X=[\tilde x_1,\tilde x_{i_2},\dots,\tilde x_{i_{\widetilde m}}]$ 
	and $H=\diag\{\theta_1,\theta_{i_2},\dots,\theta_{i_{\widetilde m}}\}$. 
\end{itemize} 

Initially, when $\widetilde m<k_{\min}$, the restarting subspace $\XX_{\new}$ is spanned by the $k_{\min}$ Ritz vectors associated with the Ritz values closest to $\tau$. As the iterations proceed and $\widetilde m\geq k_{\min}$, $\XX_{\new}$ adaptively retains all sufficiently accurate Ritz vectors within the target cluster.  
Consequently, for $k_{\min}<m\ll k_{\max}$, the thick-restart JD-V 
algorithm requires substantially fewer outer iterations than the  
conventional scheme with a fixed restart dimension $k_{\new}\equiv k_{\min}$. 
 
\subsection{Deflation}\label{subsec:5} 
The JD-V method naturally incorporates deflation when computing several  eigenpairs $(\lambda_i, x_i)$, $i=1, \dots, \ell$ closest to the target $\tau$. 
 
Assume that $\ell_{\mathrm{c}}<\ell$ approximate eigenpairs 
$(\theta_{1,\mathrm{c}},\tilde x_{1,\mathrm{c}}),\dots,
(\theta_{\ell_{\mathrm{c}},\mathrm{c}}, \tilde x_{\ell_{\mathrm{c}}, \mathrm{c}})$ 
  have converged to the desired $(\lambda_1,x_1)$, $\dots$, 
$(\lambda_{\ell_{\mathrm{c}}},x_{\ell_{\mathrm{c}}})$,  
with the approximate eigenvectors mutually orthonormal. 
Here,  the subscript ``$\mathrm{c}$'' indicates converged quantities. Then
\begin{equation}\label{convergRitz}
	(\Lambda_{\mathrm{c}},X_{\mathrm{c}})=
	(\diag\{\theta_{1,\mathrm{c}},\dots,\theta_{\ell_{\mathrm{c}},\mathrm{c}}\},
	[\tilde x_{1,\mathrm{c}},\dots,\tilde x_{\ell_{\mathrm{c}},\mathrm{c}}])
\end{equation} 
is a converged approximation to the block eigenpair 
$(\Lambda_{\ell_{\mathrm{c}}}, X_{\ell_{\mathrm{c}}})$ of $A$; see  \eqref{block}. 
By definition, the next desired  $(\lambda_{\ell_{\mathrm{c}}+1},x_{\ell_{\mathrm{c}}+1})$ 
is an eigenpair of  $(I-X_{\ell_{\mathrm{c}}}X_{\ell_{\mathrm{c}}}^*)
A(I-X_{\ell_{\mathrm{c}}}X_{\ell_{\mathrm{c}}}^*)$ restricted 
to $X_{\ell_{\mathrm{c}}}^{\perp}$ with the eigenvalue closest to $\tau$.
By a continuity argument, we can apply JD-V to $\widetilde A = (I-X_{\mathrm{c}}X_{\mathrm{c}}^*)A(I-X_{\mathrm{c}}X_{\mathrm{c}}^*)$ 
restricted to $X_{\mathrm{c}}^{\perp}$ to compute it. 
 
By enforcing the search subspace $\XX\perp \spans\{X_{\mathrm{c}}\}$, the 
extraction phase of the deflated JD-V method remains identical to that in section~\ref{sec:2}. 
We take the Ritz pair $(\theta_1,\tilde x_1)$ from \eqref{Ritzpair} 
as an approximate to the desired 
$(\lambda_{\ell_{\mathrm{c}}+1},x_{\ell_{\mathrm{c}}+1})$. 
The corresponding residuals with respect to the eigenproblems of $A$ and 
$\widetilde A|_{X_{\mathrm{c}}^{\perp}}$ are $r_1$ in \eqref{extraction} 
and $\tilde r_1=(I-X_{\mathrm{c}}X_{\mathrm{c}}^*)r_1$, respectively. 
If $(\theta_1,\tilde x_1)$ has not yet converged according to \eqref{converg}, we determine the $\widetilde m-1$ indices $i_2,\dots,i_{\widetilde m}$ based on 
\eqref{select1}--\eqref{select2}, and construct $\widetilde X_{\widetilde m}$ via \eqref{tildeX}. 
We then employ MINRES to solve the modified correction equation \eqref{correctionm}, 
replacing $A$ and $r_1$ with $\widetilde A$ and  $\tilde r_1$, 
respectively. 
Since $\tilde x_i\in\XX\perp\spans\{X_{\mathrm{c}}\}$ for all $i=1,\dots,k$,  
the deflated JD-V correction equation takes the form 
\begin{equation}\label{correctionc}
	(I-\widetilde X_{\mathrm{d}}\widetilde X_{\mathrm{d}}^*)
	(A-\tau I)(I-\widetilde X_{\mathrm{d}}\widetilde X_{\mathrm{d}}^*) t = -\tilde r_1
	\quad\mbox{with}\quad
	t\perp \widetilde X_{\mathrm{d}}=[X_{\mathrm{c}},\widetilde X_{\widetilde m}],
\end{equation}    
where the subscript ``$\mathrm{d}$'' denotes deflation. 
We terminate the inner  MINRES iterations for solving \eqref{correctionc} 
when the inner residual $r_{\mathrm{in}}$ satisfies
\begin{equation}\label{innstop}
	\|r_{\mathrm{in}}\|\leq \|\tilde r_{1}\| \cdot \min\{\beta \varepsilon_{\mathrm{in}},0.1\}, 
\end{equation}
where $\beta$ depends on the current Ritz values, and the threshold 
$\varepsilon_{\mathrm{in}}\in[10^{-4},10^{-3}]$; see  
\cite{jia2014inner,jia2015harmonic}. 
We then use the approximate solution $\tilde t$ to expand the search subspace: $\XX_{\new}=\spans\{\XX,\tilde t\}$, which is inherently orthogonal to 
$\spans\{X_{\mathrm{c}}\}$ since both $\tilde t$ and $\XX$ are so. 
New Ritz pairs are then extracted from $\XX_{\new}$ until $(\theta_1,\tilde x_1)$ converges. 

Upon convergence,  we assign
$(\theta_{\ellc+1,\mathrm{c}},\tilde x_{\ellc+1,\mathrm{c}})
=(\theta_1,\tilde x_1)$, 
append it to the converged block eigenpair $(\Lambda_{\mathrm{c}},X_{\mathrm{c}})
=(\diag\{\Lambda_{\mathrm{c}},\theta_{\ellc+1,\mathrm{c}} \},
[X_{\mathrm{c}},\tilde x_{\ellc+1,\mathrm{c}}])$, and increment $\ellc$ by $1$. 
Repeat until all $\ell$ desired eigenpairs of $A$ are computed. 
 
\subsection{Purgation}\label{subsec:8}
When computing $(\lambda_{\ellc+1}, 
x_{\ellc+1})$ for $\ellc\geq 1$, the search subspace $\XX$ 
from the preceding convergence cycle for $(\lambda_{\ellc},x_{\ellc})$ 
typically retains rich information about $x_{\ellc+1}$. 
Rather than initializing a new subspace from scratch, we purge the 
recently converged Ritz vector $\tilde x_{1}=\tilde 
x_{\ellc,\mathrm{c}}$ from $\XX$, and employ the reduced subspace 
$\XX_{\new}=\spans\{\tilde x_2,\dots,\tilde x_{k}\}$ as the initial 
search subspace. 
Clearly, this $\XX_{\new}$ is inherently orthogonal to $\spans\{X_c\}$.    
An orthonormal basis for $\XX_{\new}$ and the corresponding projected 
matrix $H$ are naturally given by
\begin{equation*}
	\widetilde X=[\tilde x_2,\dots,\tilde x_{k}]
	\qquad\mbox{and}\qquad
	H= \diag\{\theta_2,\dots,\theta_k\}.
\end{equation*} 
Referred to as purgation \cite{huang2019inner}, this technique leverages  
the reduced subspace to substantially reduce the total number of outer iterations required for computing the $2$nd through $\ell$th eigenpairs.  
 
\subsection{Adaptively changing shift}\label{subsec:6}
As $\ellc$ increases, the deflated operator $\widetilde A|_{\widetilde X_{\mathrm{c}}^{\perp}}$ has $m-\ellc$ clustered eigenvalues near $\tau$ (i.e., $\lambda_{\ellc+1},\dots,\lambda_{m}$), dictating 
$1\leq \widetilde m\leq m-\ellc$ for $\ellc<m$. 
When $\ellc\geq m$, we have $\widetilde m\equiv1$, which reduces   
\eqref{correctionc} to the standard deflated correction equation, 
and simplifies JD-V to JD. 
While this enables rapid MINRES convergence, the outer iterations 
often converge slowly.  
To accelerate outer convergence and enhance overall efficiency, we 
substitute the target $\tau$ in \eqref{correctionc} with a dynamic 
shift $\theta_1$ once it is sufficiently accurate. Specifically, when  
\begin{equation}\label{switch}
	\|r_1\|\leq \|A\|_1 \cdot \varepsilon_{\mathrm{\tau}} 	
\end{equation} 
for a user-prescribed tolerance $\varepsilon_{\mathrm{\tau}} >\varepsilon_{\mathrm{out}}$, we form $\widetilde X_{\widetilde m}=[x_1,x_{i_2},\dots,x_{i_{\widetilde m}}]$ with the indices $i_2,\dots,i_{\widetilde m}$ determined via \eqref{select1}--\eqref{select2} by replacing $\tau$ with $\theta_1$. 
The deflated correction equation with this adaptive shift then becomes
\begin{equation}\label{correctiont}
	(I-\widetilde X_{\mathrm{d}}\widetilde X_{\mathrm{d}}^*)
	(A-\theta_1 I)(I-\widetilde X_{\mathrm{d}}\widetilde X_{\mathrm{d}}^*) t = -\tilde r_{1}
	\quad\mbox{with}\quad
	t\perp \widetilde X_{\mathrm{d}}.
\end{equation} 

The convergence theory established in section~\ref{subsec:2} applies to 
\eqref{correctiont} by replacing $\tau$ with $\theta_1$, 
implying that MINRES converges substantially faster for \eqref{correctiont} 
than for the fixed-shift equation \eqref{correctionc}, provided that 
the desired eigenvalue $\lambda_{\ell_{\mathrm{c}}+1}$ is well separated from   $\lambda_1,\dots,\lambda_{\ell_{\mathrm{c}}}$, yet tightly  
clustered with the remaining eigenvalues of $A$.

\subsection{Thick-restart JD-V with deflation and purgation, a pseudocode}\label{subsec:7}
Algorithm~\ref{alg1} sketches the thick-restart JD-V algorithm  
with deflation and purgation. 

\begin{algorithm}[htbp]
	\caption{Thick-restart JD-V with deflation and purgation for the target $\tau$.}
	\begin{algorithmic}[1]\label{alg1}
		\STATE{Initialization: Set $X_{\mathrm{c}}=[\ ]$, $\Lambda_{\mathrm{c}}=[\ ]$, $\ellc=0$, $\widetilde X=[\ ]$, $H=[\ ]$, $k=0$, and $x_+=x_0$.}
		
		\WHILE{$\ellc<\ell$}
		
		\STATE{Update $\widetilde X=[\widetilde X,x_+]$ and $H=\widetilde X^*A\widetilde X$, and set $k=k+1$.}
		
		\STATE{\label{step}Compute the eigenvalue decomposition \eqref{projection} of $H$. Form  the Ritz pair  
			$(\theta_1,\tilde x_1)=(\theta_1,\widetilde Xs_1)$ and the 
			residual $r_1=A\tilde x_1-\theta_1\tilde x_1$.}
		
		\IF{$\|r_1\|\leq\|A\|_1\cdot \varepsilon_{\mathrm{out}}$}
		
		\STATE{Update $\Lambda_{\mathrm{c}}
			=\diag\{\Lambda_{\mathrm{c}},\theta_1\}$ and
			$X_{\mathrm{c}}=[X_{\mathrm{c}},\tilde x_1]$, and set $\ellc=\ellc+1$.}
		
		\STATE{\textbf{if} $\ellc=\ell$, \textbf{then} return 
			$(\Lambda_{\mathrm{c}},X_{\mathrm{c}})$ and terminate; \textbf{else} 
			update $\widetilde X=[\tilde x_2,\dots,\tilde x_k]$ and $H=\diag\{\theta_2,\dots,\theta_{k}\}$, set 
			$k=k-1$,  and go to step~\ref{step}. \textbf{fi}\label{step2}}
		
		\ENDIF
		
		\STATE{\textbf{if} $\|r_1\|\leq\|A\|_1\cdot\varepsilon_{\tau}$, \textbf{then} 
			set the inner shift $\rho=\theta_1$; \textbf{else} set $\rho=\tau$. \textbf{fi}}
		
		\STATE{\label{step3}Determine the $\widetilde m -1$ indices $i_2,\dots,i_{\widetilde m}\in\{2,\dots,k\}$ according to \eqref{select1}--\eqref{select2} with $\tau$ replaced by $\rho$, 
			and form  $\widetilde X_{\widetilde m}=
			[\tilde x_1,\tilde x_{i_2},\dots,\tilde x_{i_{\widetilde m}}]$.}
		
		\STATE{\label{step4}
			Set $\widetilde X_{\mathrm{d}}=[X_{\mathrm{c}},\widetilde X_{\widetilde m}]$ and $\tilde r_{1}=(I-X_{\mathrm{c}}X_{\mathrm{c}}^*)r_1$.  
			Use MINRES to solve 
			\begin{equation}\label{correctionfinal} 
				(I-\widetilde X_{\mathrm{d}}\widetilde X_{\mathrm{d}}^*)
				(A-\rho I)(I-\widetilde X_{\mathrm{d}}\widetilde X_{\mathrm{d}}^*) t
				=-\tilde r_{1}
				\qquad\mbox{for}\qquad
				t\perp \widetilde X_{\mathrm{d}} 
			\end{equation} 
			until the inner residual $r_{\mathrm{in}}$ of 
			the approximate solution $\tilde t$ satisfies \eqref{innstop}.} 
		
		\STATE{\textbf{if} $k=k_{\max}$ \textbf{then} perform the 
			thick restart and reset $k=\max\{k_{\min},\widetilde m\}$. 
			\textbf{fi}.}
		
		\STATE{Orthonormalize $\tilde t$ against $\widetilde X$ to 
			obtain 
			$x_+=\frac{(I-\widetilde X\widetilde X^*)\tilde t}
			{\|(I-\widetilde X\widetilde X^*)\tilde t\|}$.} 
		
		\ENDWHILE  
	\end{algorithmic}
\end{algorithm}

To reduce computational costs during the extraction phase, we do not 
explicitly evaluate all Ritz pairs and their residuals. 
Rather, we selectively compute those associated with the principal Ritz 
value $\theta_1$ at Step~\ref{step}, and those with Ritz values nearest to 
the inner shift $\rho=\tau$ or $\theta_1$ at Step~\ref{step3}.
As the eigenpairs of the diagonal matrix $H$ are trivially known when 
turning from step~\ref{step2} to step~\ref{step}, it is sufficient to 
compute the residual $r_1$ for the target pair $(\theta_1,\tilde x_1)$.   
In step~\ref{step4}, solving \eqref{correctionfinal} via MINRES involves
matrix-vector multiplications between the coefficient matrix 
$\widetilde B_{\widetilde m+\ellc}$ and vectors 
$\bar t\in\mathcal{K}_j(B_{\widetilde m+\ellc},\tilde r_1)$ for $j\geq1$. 
Clearly, $\widetilde B_{\widetilde m+\ellc}\bar t=\widehat B_{\widetilde m+\ellc}\bar t$ with   
\begin{equation*}
 	\widehat B_{\widetilde m+\ellc} = 
 	(I-\widetilde X_{\mathrm{d}}\widetilde X_{\mathrm{d}}^*)
 	(A-\rho I). 	 
\end{equation*}   
Therefore, we equivalently apply the one-sided projected matrix $\widehat B_{\widetilde m+\ellc}$ at each MINRES step, which is substantially more efficient than applying the two-sided projection $\widetilde B_{\widetilde m+\ellc}$, especially when 
$A$ is highly sparse and the dimension $\widetilde m+\ellc$ is large. 

%it is equivalent to apply the one-sided projected matrix $\widehat B_{\widetilde m+\ellc}$ at each MINRES step. This is substantially more efficient than utilizing the two-sided projected matrix $\widetilde B_{\widetilde m+\ellc}$, especially when 
%$A$ is highly sparse and the dimension $\widetilde m+\ellc$ is relatively large.  
  
Given a target $\tau$, Algorithm~\ref{alg1} requires as inputs a routine 
for computing matrix-vector products with $A$, the number $\ell$ of 
desired eigenpairs, and the outer stopping tolerance $\varepsilon_{\mathrm{out}}$ 
in \eqref{converg}. Upon convergence, it outputs an approximate 
block eigenpair $(\Lambda_{\mathrm{c}},X_{\mathrm{c}})$ of size $\ell$,  
with $X_{\mathrm{c}}$ having orthonormal columns, satisfying 
\begin{equation*} 
	\|AX_{\mathrm{c}}-X_{\mathrm{c}}\Lambda_{\mathrm{c}}\|_\mathrm{F}
	\leq\sqrt{\ell}\|A\|_1\cdot\varepsilon_{\mathrm{out}}, 
\end{equation*}
where $\|\cdot\|_\mathrm{F}$ denotes the Frobenius norm. Optional 
parameters include the normalized starting vector $x_0\in\mathbb{C}^n$, 
the subspace dimension bounds $k_{\max}$ and $k_{\min}$, the inner 
stopping tolerance $\varepsilon_{\mathrm{in}}$ in \eqref{innstop}, 
the threshold $\varepsilon_{\tau}$ in \eqref{switch} governing the  transition from the fixed target $\tau$ to the adaptive shift $\theta_1$,  
and the clustering and accuracy tolerances $\epsilon_1$ and $\epsilon_2$ 
in \eqref{select1}--\eqref{select2}.  
The default values are set as $x_0=\frac{1}{\sqrt{n}}[1,\dots,1]^*$, $k_{\max}=30$, 
$k_{\min}=3$, $\varepsilon_{\mathrm{in}}=10^{-3}$, $\varepsilon_{\tau}=10^{-4}$ and $\epsilon_1=0.05$, 
$\epsilon_2=0.01$. 
Additionally, MINRES utilizes the zero vector as its initial guess when 
solving the correction equation \eqref{correctionfinal}. 
 
\section{Numerical examples}\label{sec:5}
We report numerical experiments on several test problems to illustrate 
the effectiveness and efficiency of the thick-restarted JD-V 
algorithm for computing several eigenpairs of a large Hermitian matrix $A$ 
near $\tau$. The algorithm was developed in MATLAB. 
All the experiments were implemented on an Intel (R) core (TM) 
i9-10885H CPU 2.40 GHz with the main memory 64 GB and 16 cores 
using the MATLAB R2021a with the machine precision 
$\epsilon=2.22\times 10^{-16}$ under the Microsoft Windows 10 
64-bit system.

%\begin{table}[tbhp]
%	\caption{Properties of the test matrices from the
%		SuiteSparse Matrix Collection \cite{davis2011university}: part I.}\label{table00}
%	\begin{center}
%		\begin{tabular}{ccccc} \toprule
%			$A$&$n$&$\textit{nnz}$&$\lambda_{\max}$&$\lambda_{\min}$  \\ \midrule
%			e40r0100 &6691&46187&7.32&-2.27  \\
%			 
%			\bottomrule
%		\end{tabular}
%	\end{center}
%\end{table}
%
%Table~\ref{table0} lists the test matrices from the SuiteSparse Matrix Collection \cite{davis2011university} 
%together with some of their basic properties, where $nnz$ denotes the total number of
% nonzero entries in $A$, and the largest and smallest eigenvalues $\lambda_{\max}$ and $\lambda_{\min}$ of
% $A$ are, only for experimental purpose, computed by the MATLAB functions {\sf eigs} and
%{\sf eig} for the very large matrices ``??'', ``??'', and all the other moderately large ones, respectively.
\begin{exper}\label{exper1}
	Compute the $20$ largest, smallest and interior clustered eigenpairs of the $6691\times 6691$ sparse ``barth'' matrix  
	(with $46187$ nonzero entries) from the SuiteSparse Matrix Collection 
	\cite{davis2011university} using the {\em unrestarted} standard JD 
	($\epsilon_1=\epsilon_2=0$), JD-V ($\epsilon_1=0.01$ and 
	$\epsilon_2=0.05$) and JD-V($+\infty$) ($\epsilon_1=\epsilon_2=+\infty$) 
	algorithms, with the targets $\tau=7.4$, $-2.3$, and $0$, respectively. 
	Set the outer stopping tolerance to $\varepsilon_{\mathrm{out}}=10^{-12}$, 
	and keep all other parameters at their default values.   
\end{exper}

\begin{table}[tbhp]
	\caption{Results of the unrestarted JD-type algorithms on ``barth'' with $\ell=20$.}\label{table1a} 
	\begin{center}
		\begin{tabular}{cccccccccc} \toprule
			\multirow{2}{*}{$\tau$}
			&\multicolumn{3}{c}{JD}
			&\multicolumn{3}{c}{JD-V} 
			&\multicolumn{3}{c}{JD-V($+\infty$)} 	\\
			\cmidrule(lr){2-4} \cmidrule(lr){5-7}\cmidrule(lr){8-10}
			&$i_{\mathrm{out}}$ &$i_{\mathrm{in}}$ &time  
			&$i_{\mathrm{out}}$ &$i_{\mathrm{in}}$ &time  
			&$i_{\mathrm{out}}$ &$i_{\mathrm{in}}$ &time   \\   \midrule 
			\ 7.4  &\textbf{105}  &4584  &\textbf{0.73}  &129  &\textbf{3502}  &1.20  &363 &5263  &9.76 \\
			-2.3  &\textbf{105}  &5653  &\textbf{0.82}  &126  &\textbf{4007}  &1.09  &490  &8877  &18.9 \\
			0 	 &\textbf{121}  &520280  &44.0  &218  &\textbf{306181}  &\textbf{32.3}  &251  &316460  &36.2 \\
			\bottomrule
		\end{tabular}
	\end{center}
\end{table}

\begin{figure}[tbhp]
	\centering
	\includegraphics[width=0.48\textwidth]{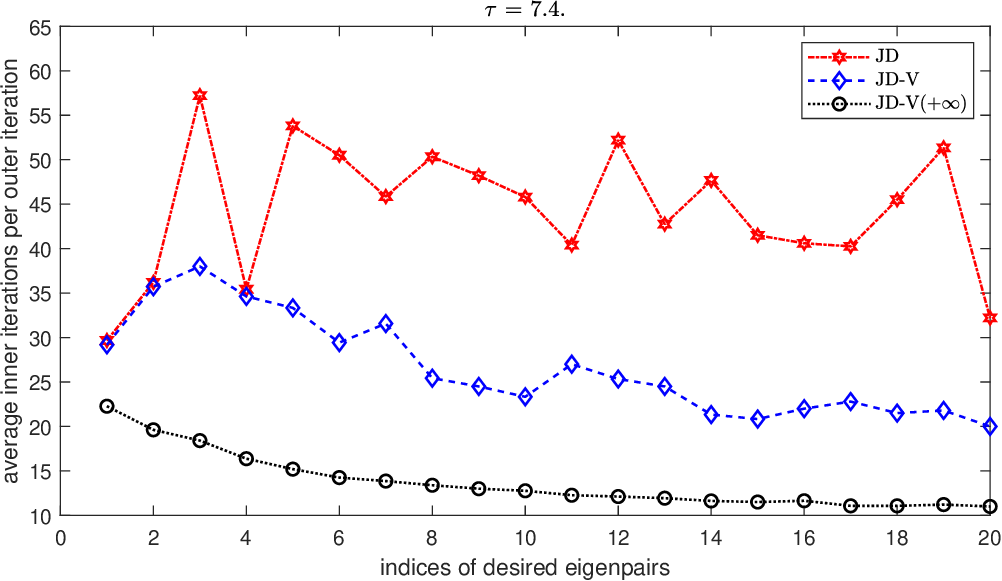}\hfill
	\includegraphics[width=0.48\textwidth]{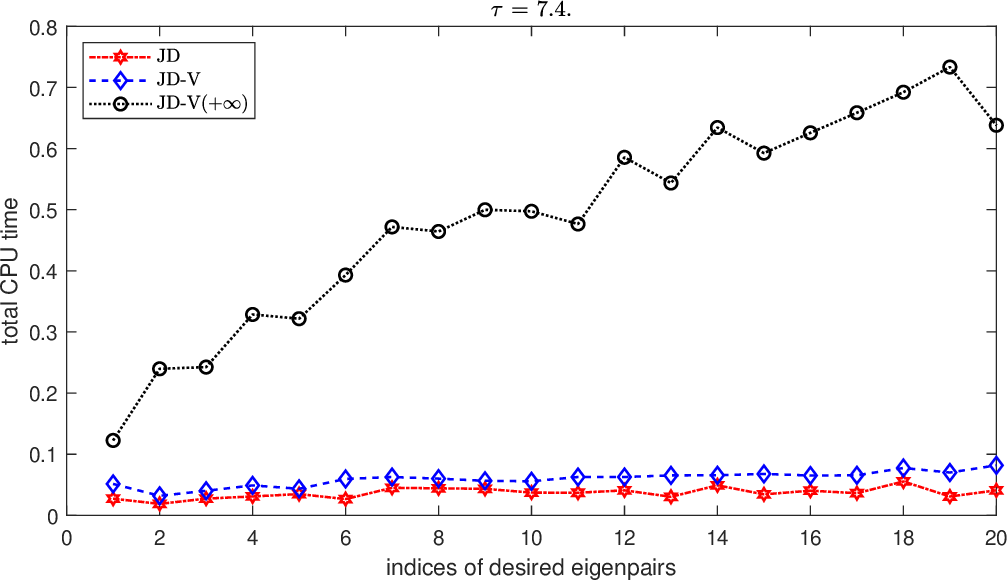}\\[0.2em]
	
	\includegraphics[width=0.48\textwidth]{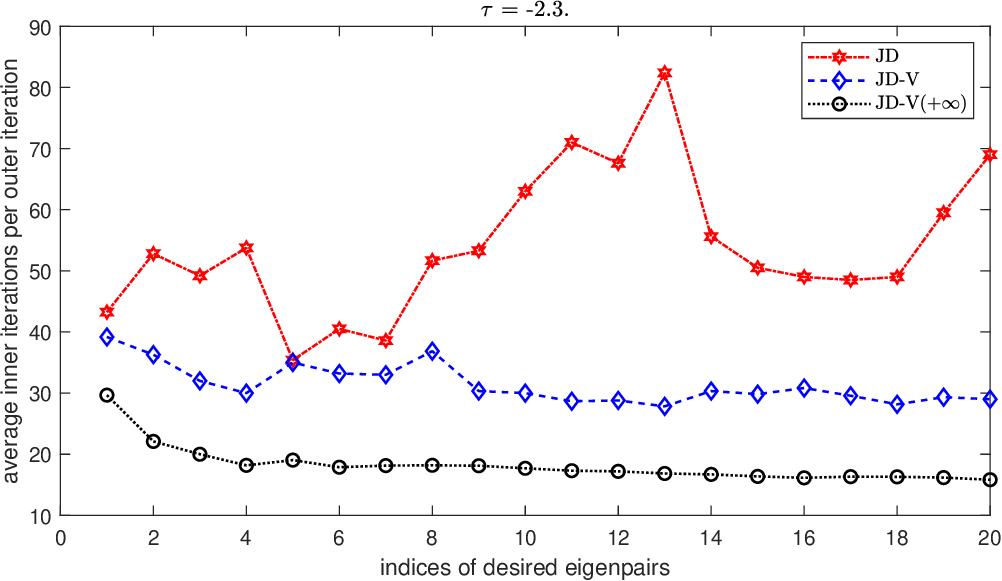}\hfill
	\includegraphics[width=0.48\textwidth]{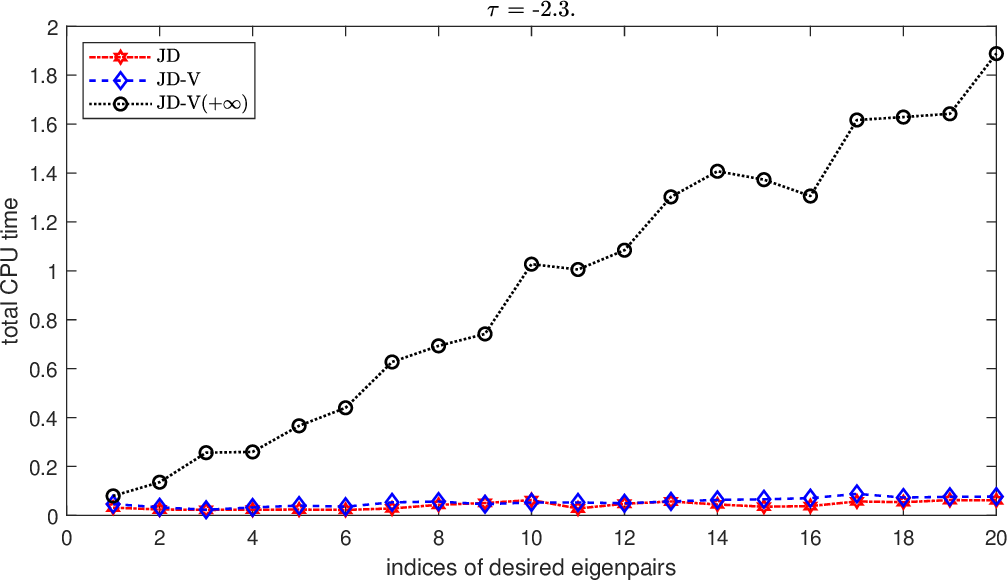}\\[0.2em]
	
	\includegraphics[width=0.48\textwidth]{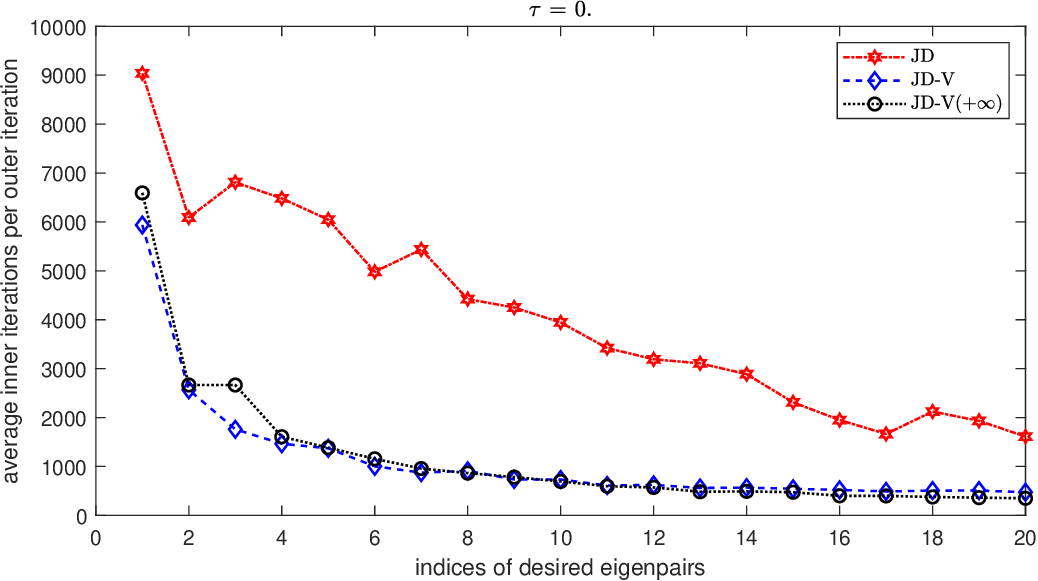}\hfill
	\includegraphics[width=0.47\textwidth]{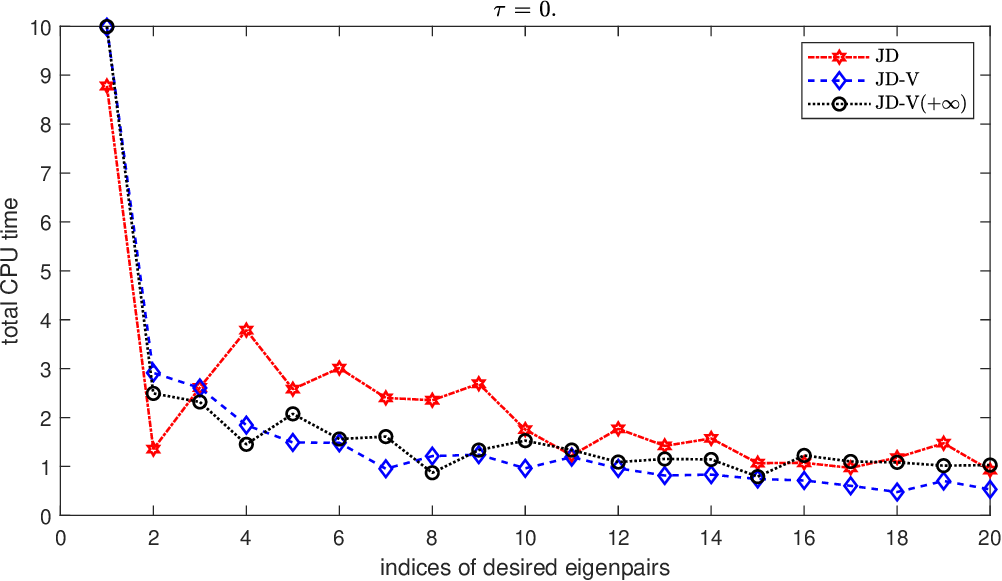} 
	
	\caption{Computing $20$ eigenpairs of ``barth'' using the unrestarted JD-type algorithms.}\label{fig1a}
\end{figure}
 
For each target, all three unrestarted algorithms successfully compute all 
the desired eigenpairs of $A$. Table~\ref{table1a} reports the 
computational results, where $i_{\mathrm{out}}$ and $i_{\mathrm{in}}$ 
denote the total numbers of outer and inner iterations required for convergence, respectively;  ``time'' represents the overall CPU time 
in seconds measured via the MATLAB timer; and boldface values indicate 
the optimal results. Figure~\ref{fig1a} depicts the average number of 
inner iterations per outer iteration (left) and total CPU time (right) 
each algorithm used to compute each desired eigenpair.
 
As shown in Table~\ref{table1a}, for each $\tau$, JD-V requires slightly 
or even considerably more outer iterations than JD, for the reasons 
detailed in Remark~\ref{remark9}.     
Nonetheless, it reduces the inner iterations of JD by 
23.60\%, 29.12\%, and 41.15\% when computing the largest, smallest, and 
interior eigenpairs, respectively, demonstrating that MINRES solves 
the correction equations in JD-V more efficiently than those JD, as corroborated by Figure~\ref{fig1a} (left).  
Consequently, JD-V consumes moderately more CPU time than JD for the first 
two relatively straightforward problems. 
This is primarily attributed to the growing overhead of the extraction 
phase during outer iterations; for this moderately sized matrix, 
computing the spectral decomposition of the projected matrix becomes 
increasingly expensive as the search subspace expands. 
However, for the most challenging problem ($\tau=0$), JD-V 
reduces the CPU time by 26.59\% compared to JD, confirming the efficacy of 
the proposed correction equations. 
  
We see from Figure~\ref{fig1a} that for most of the desired eigenpairs 
across all targets, JD-V($+\infty$) requires fewer, and occasionally 
substantially fewer, inner iterations on average per outer iteration 
than both JD and JD-V. 
However, it suffers from a significantly higher number of outer 
iterations, driven by the factors explained in Remark~\ref{remark3}. 
As a result, the overall efficiency of JD-V($+\infty$) remains considerably inferior to that of JD and JD-V in 
terms of both iteration counts and total CPU time.  

Furthermore, we observe that for the highly challenging interior eigenvalue problem ($\tau=0$), each algorithm requires substantially fewer outer iterations to compute the subsequent eigenpairs than to compute the first one, validating the effectiveness of the purgation technique introduced in section~\ref{subsec:8}. In addition, as illustrated in Figure~\ref{fig1a}, MINRES converges significantly faster on average for the second through last eigenpairs. This occurs because high-quality spectral information accumulates within the search subspace, which in turn leads to better-conditioned correction equations. As a result, all three algorithms consume substantially less CPU time computing the subsequent eigenpairs than the first one. Notably, for this problem, although JD-V uses more CPU time than JD to compute the first eigenpair due to more outer iterations, it ultimately achieves a lower total CPU time owing to the computational savings accrued during the computation of subsequent eigenpairs. 
This demonstrates that \emph{the advantages of JD-V over the standard JD become increasingly pronounced when multiple eigenpairs, rather than a single one, is computed}.
 
\begin{exper}\label{exper2}
Following the exact setup in Experiment~\ref{exper1}, 
compute the $20$ eigenpairs of $A$ using the thick-restart JD, JD-V, and JD-V($+\infty$) algorithms, with the search subspace dimension bounded by $k_{\max}=30$ and $k_{\min}=3$.	 
\end{exper} 

For each target, all three algorithms successfully compute the desired eigenpairs of $A$, with the numerical results reported in 
Table~\ref{table1b} and Figure~\ref{fig1b}.
 
\begin{table}[tbhp]
	\caption{Results of the thick-restart JD-V algorithms on ``barth'' with $\ell=20$.}\label{table1b}
	\begin{center}
		\begin{tabular}{cccccccccc} \toprule
			\multirow{2}{*}{$\tau$}
			&\multicolumn{3}{c}{JD}
			&\multicolumn{3}{c}{JD-V} 
			&\multicolumn{3}{c}{JD-V($+\infty$)} 	\\
			\cmidrule(lr){2-4} \cmidrule(lr){5-7}\cmidrule(lr){8-10}
			&$i_{\mathrm{out}}$ &$i_{\mathrm{in}}$ &time  
			&$i_{\mathrm{out}}$ &$i_{\mathrm{in}}$ &time  
			&$i_{\mathrm{out}}$ &$i_{\mathrm{in}}$ &time   \\   \midrule 
			\ 7.4  &125  &6269 &0.74   &\textbf{123} &\textbf{3639} &\textbf{0.72}    &548 &18066 &2.97 \\
			-2.3  &\textbf{117} &7092 &\textbf{0.79}   &132 &\textbf{ 4310} &\textbf{0.79}   &546 &19844 &3.17 \\
			0 	   &\textbf{119} &601040 &50.7   &211 &\textbf{305923} &\textbf{31.5}   &413 &1094717 &102 \\
			\bottomrule
		\end{tabular}
	\end{center}
\end{table}

\begin{figure}[tbp]
	\centering
	\includegraphics[width=0.48\textwidth]{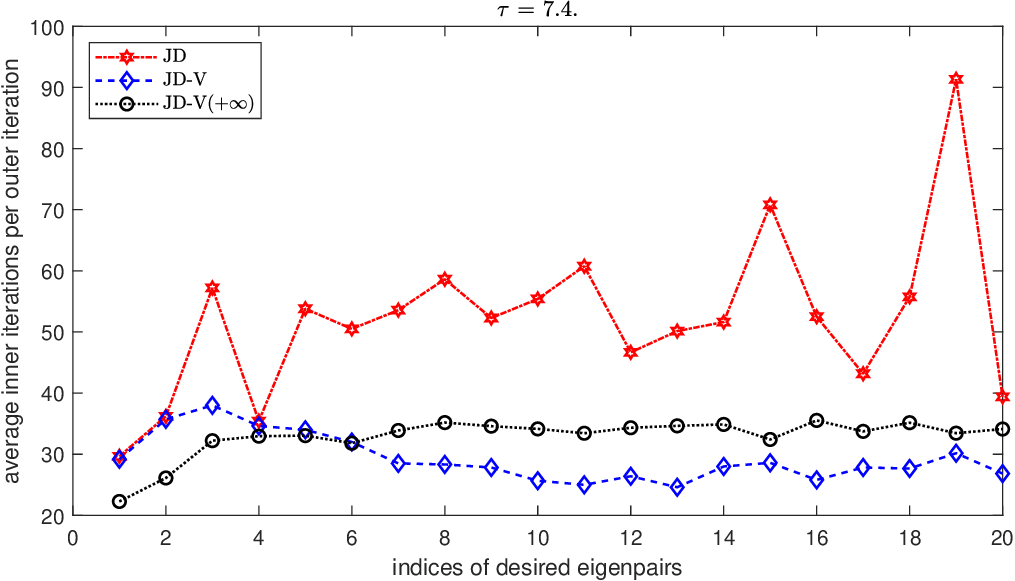}\hfill
	\includegraphics[width=0.48\textwidth]{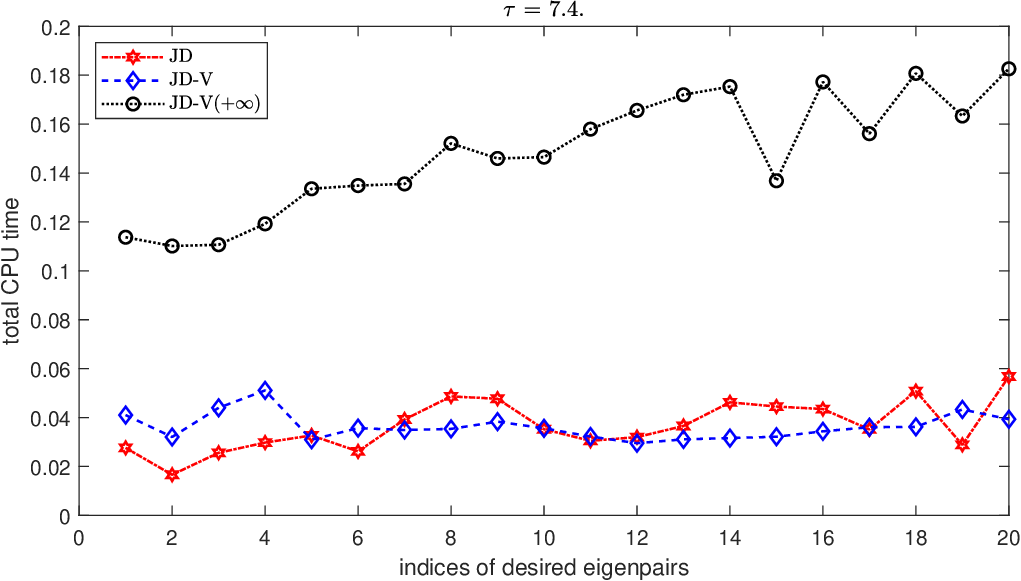}\\[0.2em]
	
	\includegraphics[width=0.48\textwidth]{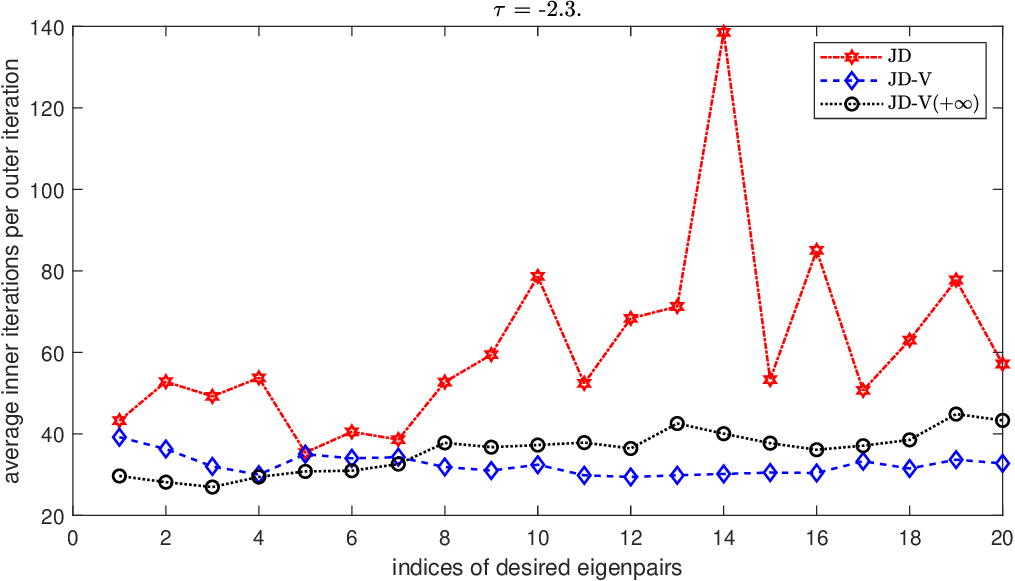}\hfill
	\includegraphics[width=0.48\textwidth]{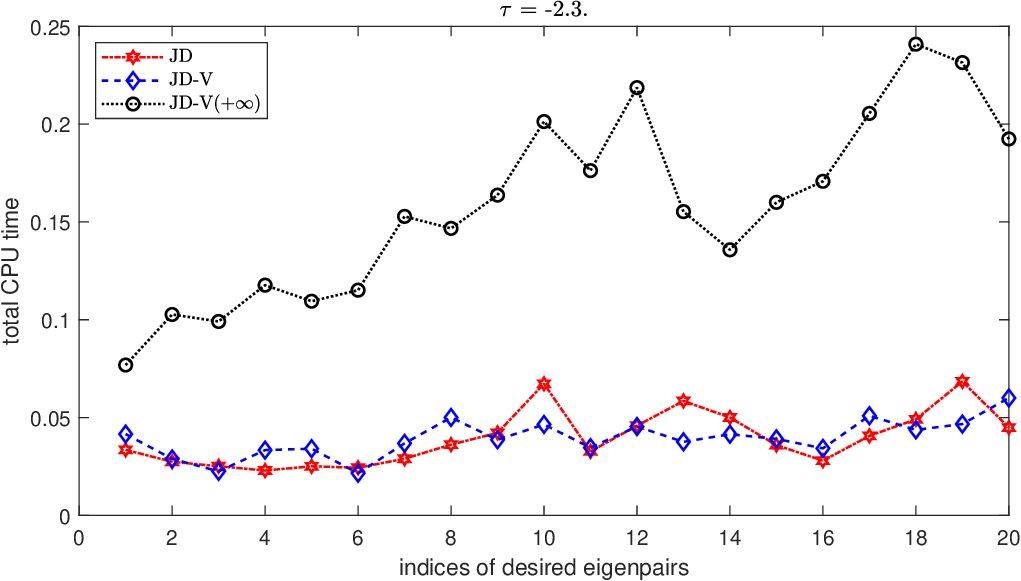}\\[0.2em]
	
	\includegraphics[width=0.48\textwidth]{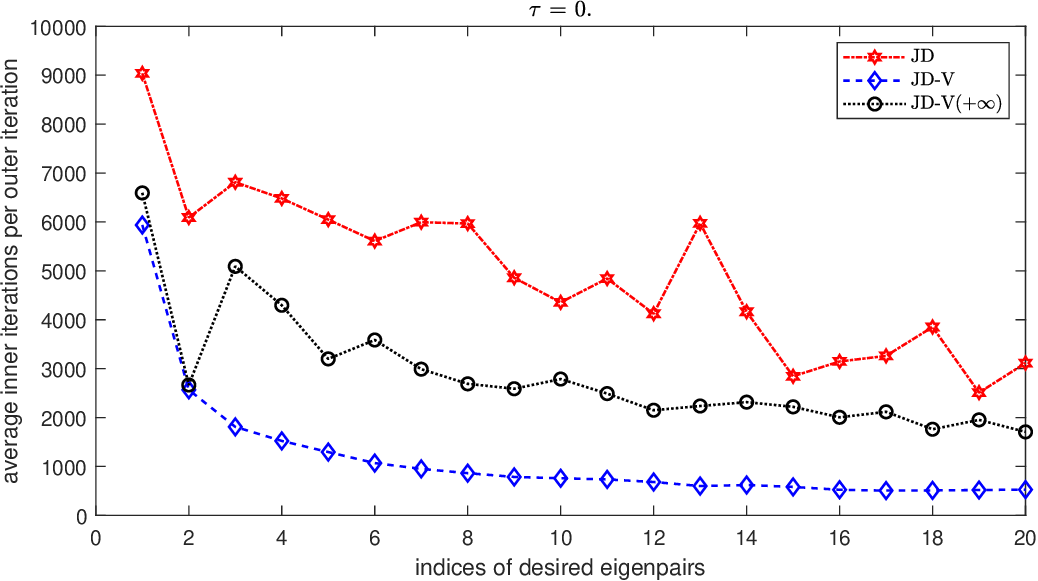}\hfill
	\includegraphics[width=0.47\textwidth]{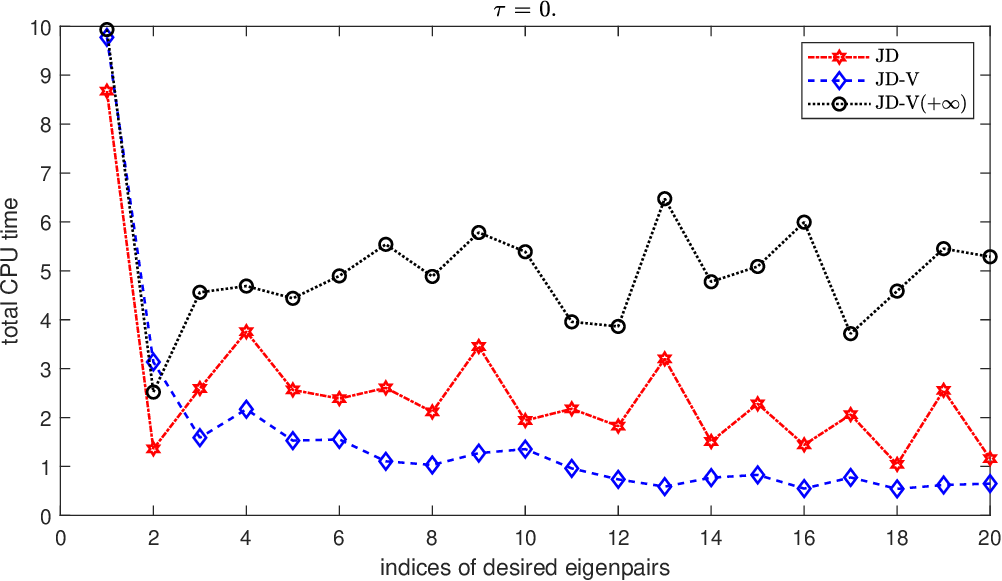} 
	
	\caption{Computing 20 eigenpairs of ``barth'' using the thick-restart JD-type algorithms.}\label{fig1b}
\end{figure} 

Comparing the results in Tables~\ref{table1a} and \ref{table1b}, we 
observe that for most of the test problems, the thick-restart JD and  JD-V($+\infty$) algorithms requires more and, in some cases, 
substantially more outer iterations than their unrestarted counterparts 
to achieve convergence, thereby necessitating more inner iterations  
and, in certain cases, longer CPU times. 
This is expected, as the standard thick restart strategy inevitably 
discards high-quality spectral information from the search subspace. 
In contrast, thick-restart JD-V requires a number of outer iterations comparable to that of its unrestarted variant across all problems, demonstrating the effectiveness of the adaptive thick-restart 
strategy described in section~\ref{subsec:4}. 
Notably, for the first two problems, although thick-restart JD-V 
uses slightly more inner iterations than its unrestarted counterpart, 
it consumes considerably less total CPU time. 
This highlights the non-negligible computational overhead associated with 
the extraction phase in the unrestarted algorithm for such moderately sized matrices. 
 
Among the three thick-restart JD‑type algorithms, we observe qualitatively similar performance to that described previously. 
In summary, JD-V outperforms JD: while it requires a comparable number of outer iterations for the first two problems and more for the third, 
it achieves a 39.23\%--49.10\% reduction in total inner iterations. 
This translates to comparable CPU times for the first two cases and a 
37.87\% savings in CPU time for the third. 
Ultimately, both thick-restart JD and JD‑V significantly outperform JD-V($+\infty$) in terms of both iteration counts and total CPU time.  
 
\begin{exper}
Compute the 20 eigenpairs of the eight matrices listed in 
Table~\ref{table2} for their respective targets $\tau$ using the 
{\em thick-restart} JD and JD-V algorithms, with 
$\varepsilon_{\mathrm{out}}=10^{-12}$ and all other parameters kept at 
their default values. 
\end{exper}

The desired eigenpairs for ``nemeth01'' and ``crack'', and for 
``spmsrtls'' and ``Kuu'', are the largest and smallest clustered ones, 
respectively. Those of the remaining four matrices correspond to 
interior clustered eigenvalues. 
Both algorithms successfully compute  
all target eigenpairs across all test matrices, and the computational results are summarized in Table~\ref{table2}.  
 
\begin{table}[tbhp]
 	\caption{Results for computing $20$ eigenpairs of eight matrices from \cite{davis2011university} with specified targets, where ``spa'' $=\mathrm{nnz}/n$ represents the average number of nonzeros 
 		per column, reflecting the sparsity.}\label{table2}
 	\begin{center}
 		\begin{tabular}{cccccccccc}
 			\toprule
 			\multirow{2}{*}{$A$}&\multirow{2}{*}{$n$}&\multirow{2}{*}{$spa$} &\multirow{2}{*}{$\tau$}
 			&\multicolumn{3}{c}{JD}
 			&\multicolumn{3}{c}{JD-V}\\
 			\cmidrule(lr){5-7} \cmidrule(lr){8-10}
 			&&&&$i_{\mathrm{out}}$ &$i_{\mathrm{in}}$ &time 
 			&$i_{\mathrm{out}}$ &$i_{\mathrm{in}}$ &time  \\\midrule
 			
 			nemeth01    &9506  &76.3  &5.4   &\textbf{95}  &336952  &107   &326 &\textbf{130256} &\textbf{59.2} \\
 			crack       &10240 &5.9   &6.4   &\textbf{108} &18702   &2.48  &116 &\textbf{8686}   &\textbf{1.77} \\
 			spmsrtls    &29995 &7.7   &\!-14.6\! &\textbf{123} &33033   &8.85  &128 &\textbf{14286}  &\textbf{5.81} \\
 			Kuu         &7102  &47.9  &0     &\textbf{74}  &21200   &3.47  &103 &\textbf{11958}  &\textbf{2.27} \\
 			\!wing\_nodal\! &10937 &13.8  &0     &\textbf{107} &1163064 &247   &214 &\textbf{595309} &\textbf{142}  \\
 			copter1     &17222 &12.3  &0     &\textbf{111} &607426  &122   &192 &\textbf{323165} &\textbf{75.5} \\
 			whitaker3   &9800  &5.9   &0     &\textbf{127} &656469  &73.1  &221 &\textbf{331260} &\textbf{46.0} \\
 			cti         &16840 &5.7   &0     &\textbf{113} &2064157 &328   &254 &\textbf{982183} &\textbf{212}  \\
 			
 			\bottomrule
 		\end{tabular}
 	\end{center}
\end{table}
 
As observed from Table~\ref{table2}, JD-V requires more, and in some cases 
substantially more, outer iterations than JD across all eight test 
matrices, for the reasons detailed in Remark~\ref{remark9}. 
Nevertheless, it consumes substantially fewer inner iterations and 
considerably less CPU time. 
Specifically, when computing the extreme eigenpairs of ``nemeth01'',  ``spmsrtls'', and ``Kuu'',   
JD-V reduces total inner iterations by 43.59\%--61.34\% and the 
overall CPU time by 28.66\%--44.97\% compared to JD. 
For the more challenging clustered interior eigenproblems of the remaining four matrices (``wing\_nodal'', ``copter1'', ``whitaker3'', and ``cti''), where JD requires substantially more inner iterations,  JD‑V achieves reductions of 46.80\%--52.42\% in inner iterations and 35.31\%--42.59\% in CPU time, despite requiring 73\%--125\% more outer iterations. 
Overall, JD‑V clearly outperforms JD across all eight test problems in terms of both total inner iterations and CPU time. 
 
In particular, we observe that for the four highly sparse matrices ``crack'', ``spmsrtls'', ``whitaker3'' and ``cti'', the percentage reduction in CPU time achieved by JD-V relative to JD (28.66\%, 34.35\%, 37.16\%, and 35.31\%, respectively) is considerably smaller than the corresponding reduction in total inner iterations (53.56\%, 56.75\%, 49.54\%, and 52.42\%). 
For these matrices, the computational cost per MINRES inner iteration when solving the correction equation \eqref{correctionm} (or the deflated \eqref{correctionfinal}) in JD-V, which is dominated by the matrix-vector product involving the coefficient matrix, can be much higher than that of MINRES solving the standard correction equation \eqref{correction} (or its deflated variant) in JD. This demonstrates that the new correction equations in JD-V considerably improve inner-iteration efficiency, although for highly sparse matrices, the overall CPU time benefit may be partially offset by the increased cost per inner iteration. 
 
\section{Conclusions}\label{sec:6} 
For large-scale Hermitian eigenvalue problems, we have proposed a novel JD 
variant, designated as JD-V, based on a new correction equation. 
Its solution is shown to be nearly as effective as, or marginally less so 
than, that to the standard correction equation in the conventional JD 
method. We have carried out rigorous convergence analyses of MINRES for solving these two equations, showing that the inner iterations of JD-V 
can converge significantly faster that those of JD. 
We have developed a thick-restart JD-V algorithm incorporating deflation and purgation for computing several eigenpairs of a large-scale Hermitian matrix.
Numerical experiments confirm that for highly clustered eigenpairs, 
despite an occasional, marginal delay in outer convergence, JD-V 
substantially outperforms standard JD in terms of both total inner 
iteration counts and overall CPU time.
 
\section*{Declarations}

The author declare that she has no financial interests, 
and she read and approved the final manuscript. 
All numerical experiments can be reproduced using the MATLAB
implementations of the JD-V algorithm, which is publicly
available at \url{https://github.com/JinzhiHuang/JD-VH}.

%The algorithmic MATLAB code is available upon reasonable 
%request from the author.

\bibliographystyle{siamplain}
\bibliography{ipjdeigs_ref}
\end{document}